\newtheorem{theorem}{Theorem}[section]
\newtheorem{proposition}{Proposition}[section]
\newtheorem{lemma}{Lemma}[section]
\newtheorem{corollary}{Corollary}[section]
\newtheorem{OldTheorem}{Theorem}
\theoremstyle{definition}
\newtheorem{definition}{Definition}[section]
\theoremstyle{definition}
\theoremstyle{remark}
\numberwithin{equation}{section}
\title[] {Probability inequalities for multiplicative sequences of random variables}
\author{Grigori A. Karagulyan}
\address{Faculty of Mathematics and Mechanics, Yerevan State
	University, Alex Manoogian, 1, 0025, Yerevan, Armenia} 
\email{g.karagulyan@ysu.am}
\address{Institute of Mathematics of NAS of RA, Marshal Baghramian ave., 24/5, Yerevan, 0019, Armenia} 
\email{g.karagulyan@gmail.com}
\subjclass[2010]{42C05, 42C10, 42A55, 60G42, 60G48}
\keywords{multiplicative system, martingale difference, Khintchin inequality, Azuma-Hoeffding inequality, lacunary subsystem, Rademacher random variables}
\begin{document}
\begin{abstract}
We extend some sharp inequalities for martingale-differences  to general multiplicative systems of random variables. The key ingredient in the proofs is a technique reducing the general case to the case of Rademacher random variables without change of the constants in inequalities. 
\end{abstract}

	\maketitle  
\section{Introduction}
	A sequence of bounded random variables  $\phi_n$, $n=1,2,\ldots$ (finite or infinite) is said to be multiplicative if the equality
	\begin{equation}\label{a13}
	{\textbf E}\left[\phi_{n_1}\phi_{n_2}\ldots \phi_{n_\nu}\right]=0
	\end{equation}
	holds for all possible choices of indexes $n_1< n_2< \ldots< n_\nu$.  Well-known examples of multiplicative sequences are mean zero independent random variables and more general, the martingale-differences, since the condition 
	\begin{equation*}
	{\textbf E}(\phi_n|\phi_1,\ldots,\phi_{n-1})=0
	\end{equation*}
	in the definition of the martingale-difference implies \eqref{a13}. The sequences $\{\sin(2^{k+1}\pi x)\}$ and $\{\sin(2n_k\pi x)\}$, where $n_k$ are integers satisfying $n_{k+1}\ge 3n_k$, are known to be non-martingale examples of a multiplicative systems on the unit interval $(0,1)$ (see \cite{Zyg}, chap. 5).  
	
	Note that multiplicative systems were introduced by Alexits in his famous monograph \cite {Alex}. It was proved by Alexits-Sharma \cite{AlexSh} that the uniformly bounded multiplicative systems are convergence systems. Recall that an infinite system of random variables $\{\phi_k\}$ is said to be a convergence system if the condition $\sum_k a_k^2<\infty$ implies almost sure convergence of series  $\sum_k a_k\phi_k$. Furthermore, this and other convergence properties of multiplicative type systems were generalized in the papers \cite{Gap1,Gap2,KoRe,Kom, Fuk1,Fuk2}. 
	
	Let ${\mathfrak M}$ be a family of nonempty subsets of ${\mathbb Z}_n=\{1,2,\ldots,n\}$, that is ${\mathfrak M}\subset 2^{{\mathbb Z}_n}\setminus \{\varnothing\}$. 
	A system of random variables $\phi=\{\phi_k:\, k=1,2,\ldots, n\}$ is said to be ${\mathfrak M}$-multiplicative if relation \eqref{a13} holds for all $\{n_1,n_2,\ldots,n_\nu\}\in {\mathfrak M}$. If ${\mathfrak M}=2^{{\mathbb Z}_n}\setminus \{\varnothing\}$, then $\phi$ turns to be a "full" multiplicative system. Likewise, $\phi$ is called ${\mathfrak M}$-independent if for any collection $\{n_1,n_2,\ldots,n_\nu\}\in {\mathfrak M}$ the members $\phi_{n_1}, \phi_{n_2},\ldots,\phi_{n_\nu}$ are scholastically independent. We will consider systems of bounded random variables $\phi=\{\phi_k:\, k=1,2,\ldots, n\}$ satisfying
	\begin{equation}\label{a28}
	A_k\le \phi_k\le B_k,  \text{ where }A_k<0<B_k.
	\end{equation}
	Setting $C_k=\min\{-A_k,B_k\}$, we define the multiplicative error of $\phi$ over a family of index subsets ${\mathfrak M}\subset 2^{{\mathbb Z}_n}\setminus \{\varnothing\}$ to be the quantity 
	\begin{equation}\label{a30}
	\mu=\mu(\phi,{\mathfrak M})=\sum_{\{n_1,n_2,\ldots,n_\nu\}\in {\mathfrak M}}\frac{1}{C_{n_1}C_{n_2}\ldots C_{n_\nu}}\left|{\textbf E}[\phi_{n_1} \phi_{n_2}\ldots \phi_{n_\nu}]\right|.
	\end{equation}
	For an integer $l\le n$ denote by ${\mathfrak M}_l$ the family of nonempty subsets of ${\mathbb Z}_n$ having cardinality $\le l$. If $l=n$, then we have ${\mathfrak M}_n=2^{{\mathbb Z}_n}\setminus \{\varnothing\}$.

	The results of the present paper provide a technique that may reduce the study of some properties of bounded multiplicative type systems to the case of Rademacher random variables. 
	\begin{theorem}\label{T1}
		Let $\Phi:{\mathbb R}\to {\mathbb R}^+$ be a convex function and $\phi=\{\phi_k:\, k=1,2,\ldots, n\}$ be a system  of random variables satisfying \eqref{a28}.  Then for any integer $l\le n$ and a choice of coefficients $a_1,\ldots, a_n$ it holds the inequality
		\begin{equation}\label{a12}
		\textbf{E}\left[\Phi\left(\sum_{k=1}^na_k\phi_k\right)\right]\le 	(1+\mu(\phi,{\mathfrak M}_l))\textbf{E}\left[\Phi\left(\sum_{k=1}^na_k\xi_k\right)\right],
		\end{equation}
		where $\xi_k$, $k=1,2,\ldots,n$ are $\{A_k,B_k\}$- valued mean zero ${\mathfrak M}_l$-independent random variables. 
	\end{theorem}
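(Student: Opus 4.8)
The plan is to establish \eqref{a12} in two independent stages: first replace the interval-valued $\phi_k$ by two-point variables supported on the extreme values $A_k,B_k$ while preserving every joint moment, and then compare the resulting "Rademacher" model with the independent system $\xi$ by a change-of-measure argument. For the first stage I would enlarge the probability space with independent variables $U_1,\dots,U_n$ uniform on $(0,1)$ and independent of $\phi$, and set $\eta_k=B_k$ when $U_k\le (\phi_k-A_k)/(B_k-A_k)$ and $\eta_k=A_k$ otherwise. Conditionally on $\phi$ the $\eta_k$ are then independent with $\mathbf{E}[\eta_k\mid\phi]=\phi_k$, so Jensen's inequality for the convex $\Phi$ gives $\Phi\bigl(\sum_k a_k\phi_k\bigr)=\Phi\bigl(\mathbf{E}[\sum_k a_k\eta_k\mid\phi]\bigr)\le\mathbf{E}[\Phi(\sum_k a_k\eta_k)\mid\phi]$, and taking expectations yields $\mathbf{E}[\Phi(\sum_k a_k\phi_k)]\le\mathbf{E}[\Phi(\sum_k a_k\eta_k)]$. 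By the same conditional independence, for any set $S$ of distinct indices one has $\mathbf{E}[\prod_{k\in S}\eta_k]=\mathbf{E}[\prod_{k\in S}\phi_k]$, so the multiplicative error and the constants $C_k$ are unchanged; this reduces everything to a $\{A_k,B_k\}$-valued system with prescribed joint moments.

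For the second stage I would compare $\eta$ with $\xi$ through the product measure $\mu_{\mathrm{pr}}$ of the coordinates. Writing $\sigma_k^2=-A_kB_k$ and $D_k=\max\{-A_k,B_k\}$, each $\xi_k/\sigma_k$ has mean $0$ and variance $1$ under $\mu_{\mathrm{pr}}$, so the products $W_S=\prod_{k\in S}\xi_k/\sigma_k$ form an orthonormal basis of the functions on $\prod_k\{A_k,B_k\}$. Expanding $g(x)=\Phi\bigl(\sum_k a_kx_k\bigr)=\sum_S\widehat g(S)\,W_S$ and integrating against the laws of $\eta$ and of $\xi$, I obtain the clean identity $\mathbf{E}[g(\eta)]-\mathbf{E}[g(\xi)]=\sum_S\widehat g(S)\,(m^\eta_S-m^\xi_S)$, where $m^\eta_S=\mathbf{E}[\prod_{k\in S}\eta_k]/\prod_{k\in S}\sigma_k$ and $m^\xi_S$ is defined analogously.

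The estimate then rests on two elementary bounds. Since $g\ge 0$ and $|\xi_k|\le D_k$, we have $|\widehat g(S)|=|\mathbf{E}_{\mathrm{pr}}[g\,W_S]|\le\bigl(\prod_{k\in S}D_k/\sigma_k\bigr)\mathbf{E}_{\mathrm{pr}}[g]$; on the other hand $|m^\eta_S|\le|\mathbf{E}[\prod_{k\in S}\phi_k]|/\prod_{k\in S}\sigma_k$, and $\sigma_k=\sqrt{C_kD_k}\ge C_k$. Using the identity $\sigma_k^2=C_kD_k$ these multiply to $|\widehat g(S)\,m^\eta_S|\le\bigl(|\mathbf{E}[\prod_{k\in S}\phi_k]|/\prod_{k\in S}C_k\bigr)\mathbf{E}_{\mathrm{pr}}[g]$, which is precisely a summand of $\mu$. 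In the full case ($l=n$, with $\xi$ genuinely independent) one has $\mathbf{E}_{\mathrm{pr}}[g]=\mathbf{E}[g(\xi)]$ and $m^\xi_S=0$ for every nonempty $S$, so summing the last bound over all nonempty $S$ produces the factor $1+\mu(\phi,\mathfrak M_n)$ immediately.

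The delicate point, and the one I expect to be the main obstacle, is the refinement from the full family to $\mathfrak M_l$. When $\xi$ is only $\mathfrak M_l$-independent the coefficients $m^\xi_S$ vanish merely for $1\le|S|\le l$, so I must select an $\mathfrak M_l$-independent comparison system whose correlations of order exceeding $l$ offset those of $\eta$, in order to annihilate the terms with $|S|>l$ and leave in $\mathbf{E}[g(\eta)]-\mathbf{E}[g(\xi)]$ only the subsets $S\in\mathfrak M_l$. Verifying that such a system exists as a bona fide probability law, and above all reconciling the reference average $\mathbf{E}_{\mathrm{pr}}[g]$ in the bounds above with the genuine right-hand side $\mathbf{E}[g(\xi)]$, is where the argument requires the most care; this is the step I would spend the bulk of the effort on.
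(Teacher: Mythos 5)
Your two-stage argument is complete and correct in the case $l=n$, and there it is a genuinely different (and arguably cleaner) route than the paper's. Stage 1 --- conditional randomization onto $\{A_k,B_k\}$ with $\mathbf{E}[\eta_k\mid\phi]=\phi_k$ --- is a probabilistic substitute for the paper's Lemma \ref{L2}, and it even bypasses the step-function approximation of Lemma \ref{L1}. Stage 2's Walsh expansion with the bound $|\hat g(S)\,m^\eta_S|\le\bigl(|\mathbf{E}[\prod_{k\in S}\phi_k]|/\prod_{k\in S}C_k\bigr)\mathbf{E}_{\mathrm{pr}}[g]$ (via $D_k/\sigma_k^2=1/C_k$) checks out, and since a fully independent mean-zero $\{A_k,B_k\}$-valued system has the product law, $\mathbf{E}_{\mathrm{pr}}[g]=\mathbf{E}[g(\xi)]$ and summing over nonempty $S$ yields exactly $(1+\mu(\phi,\mathfrak M_n))\mathbf{E}[g(\xi)]$. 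This already suffices for Theorem \ref{T2} and for the fully multiplicative corollaries.

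For $l<n$, however, the gap you flag is genuine and your proposed repair points in the wrong direction. The obstruction is structural: your error term is controlled by $\hat g(\varnothing)=\mathbf{E}_{\mathrm{pr}}[g]$, computed against the full product measure, while the right-hand side of \eqref{a12} is $\mathbf{E}[g(\xi)]$ for a system that is only $\mathfrak M_l$-independent; these differ by the uncontrolled coefficients $m^\xi_S$, $|S|>l$, and trying to choose $\xi$ so that its high-order correlations ``offset'' those of $\eta$ is not shown to produce a legitimate probability law, nor would it reconcile the two reference averages. The key realization you are missing is that the theorem is an existence statement --- you get to \emph{build} $\xi$ out of $\phi$ rather than compare against an arbitrary one. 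The paper does this by extending the underlying space by measure $\mu(\phi,\mathfrak M_l)$ so that the extended system becomes exactly $\mathfrak M_l$-multiplicative (Lemma \ref{L0}), converting it to a two-valued system with the same joint moments and a larger convex functional (Lemma \ref{L2}), and then invoking the fact that a two-valued $\mathfrak M_l$-multiplicative system is automatically $\mathfrak M_l$-independent (Lemma \ref{L3}); the factor $1+\mu$ comes from rescaling the enlarged space back to probability one, not from summing error terms. Grafting Lemma \ref{L0} onto your Stage 1 output (which is already two-valued with the correct joint moments) would close your argument for general $l$; as written, it does not.
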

	Notice that if a system $\phi$ is ${\mathfrak M}_l$-multiplicative, then $\mu(\phi,{\mathfrak M}_l)=0$. So applying {Theorem \ref{T1} for ${\mathfrak M}_l$-multiplicative systems with the parameters $A_k=-1$ and $B_k=1$, we immediately obtain the following.
	\begin{corollary}\label{C1}
		Let $\Phi:{\mathbb R}\to {\mathbb R}^+$ be a convex function. If $\phi=\{\phi_k:\, k=1,2,\ldots, n\}$, is a system of ${\mathfrak M}_l$-multiplicative ($l\le n$) random variables satisfying $\|\phi_k\|_\infty\le 1$, then for any choice of coefficients $a_1,\ldots, a_n$ we have
		\begin{equation}\label{a33}
		\textbf{E}\left[\Phi\left(\sum_{k=1}^na_k\phi_k\right)\right]\le {\textbf E}\left[\Phi\left(\sum_{k=1}^na_kr_k\right)\right],
		\end{equation}
		where $r_k$, $k=1,2,\ldots,n$, are Rademacher ${\mathfrak M}_l$-independent random variables.
	\end{corollary}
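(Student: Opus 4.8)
The plan is to specialize Theorem \ref{T1} to the parameters $A_k=-1$ and $B_k=1$, and then to exploit the fact that $\mathfrak{M}_l$-multiplicativity forces the multiplicative error to vanish. First I would check that the hypotheses of Theorem \ref{T1} are met under this choice. The bound $\|\phi_k\|_\infty\le 1$ means $-1\le \phi_k\le 1$ almost surely, so \eqref{a28} holds with $A_k=-1<0<1=B_k$. With this choice one computes $C_k=\min\{-A_k,B_k\}=\min\{1,1\}=1$, so every denominator $C_{n_1}C_{n_2}\ldots C_{n_\nu}$ appearing in the definition \eqref{a30} of the multiplicative error equals $1$.

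Next I would observe that, by definition, an $\mathfrak{M}_l$-multiplicative system satisfies $\mathbf{E}[\phi_{n_1}\phi_{n_2}\ldots\phi_{n_\nu}]=0$ for every $\{n_1,n_2,\ldots,n_\nu\}\in\mathfrak{M}_l$. Hence every summand in \eqref{a30} vanishes, giving $\mu(\phi,\mathfrak{M}_l)=0$, so that the prefactor $1+\mu(\phi,\mathfrak{M}_l)$ on the right-hand side of \eqref{a12} collapses to $1$. The remaining task is to identify the comparison variables: Theorem \ref{T1} produces $\{A_k,B_k\}=\{-1,1\}$-valued, mean-zero, $\mathfrak{M}_l$-independent random variables $\xi_k$, and a $\{-1,1\}$-valued variable with $\mathbf{E}\xi_k=0$ must take each of the values $\pm1$ with probability $1/2$, i.e. it is exactly a Rademacher variable. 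Thus one may set $r_k=\xi_k$, and substituting $\mu(\phi,\mathfrak{M}_l)=0$ together with these identifications into \eqref{a12} yields \eqref{a33}.

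Since the entire argument is a matter of matching the constants in Theorem \ref{T1}, I do not expect any genuine obstacle. The only point that requires a moment's verification is the elementary observation that a two-point law on $\{-1,1\}$ with zero mean is forced to be the symmetric (Rademacher) one; everything else is a direct substitution.
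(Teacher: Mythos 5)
Your proposal is correct and follows exactly the paper's route: the corollary is obtained by applying Theorem \ref{T1} with $A_k=-1$, $B_k=1$ (so $C_k=1$), noting that $\mathfrak{M}_l$-multiplicativity makes every term of \eqref{a30} vanish so $\mu(\phi,\mathfrak{M}_l)=0$, and identifying the mean-zero $\{-1,1\}$-valued $\mathfrak{M}_l$-independent variables $\xi_k$ as Rademacher variables. Your extra remark that a two-point zero-mean law on $\{-1,1\}$ must be symmetric is the only detail the paper leaves implicit, and it is verified correctly.
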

	Koml\'{o}s \cite{Kom} and Gaposhkin \cite{Gap2} independently proved that
	\begin{OldTheorem}[Koml\'{o}s-Gaposhkin]\label{KG}
		If an infinite sequence of random variables $\phi=\{\phi_n\}$ satisfies condition \eqref{a13} for a fixed even integer $\nu >2$ and the norms $\|\phi_n\|_\nu$ are uniformly bounded, then $\{\phi_k\}$ is a convergence system.
	\end{OldTheorem}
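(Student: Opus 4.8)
The plan is to deduce the almost sure convergence from a single sharp moment estimate and then to feed that estimate into the classical Rademacher--Menshov machinery; I take throughout the standard normalization $\mathbf{E}[\phi_n]=0$ and write $M=\sup_n\|\phi_n\|_\nu$. The target estimate is
\[
\mathbf{E}\left|\sum_{k=m+1}^{n}a_k\phi_k\right|^{\nu}\le C_\nu M^\nu\Bigl(\sum_{k=m+1}^{n}a_k^2\Bigr)^{\nu/2},\qquad m<n,
\]
with $C_\nu$ depending only on $\nu$. Had $\phi$ been additionally uniformly bounded and multiplicative at all orders $\le\nu$, Corollary \ref{C1} would give this instantly by transferring the Khintchin inequality from the Rademacher system; the point of the present hypotheses is that only order $\nu$ and only the $L^\nu$ norms are available, which forces a direct argument.

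To prove the moment estimate I would put $S=\sum_k a_k\phi_k$, $T=\sum_k a_k^2$ and $Q=\mathbf{E}[S^\nu]$, note $Q\ge0$ since $\nu$ is even, and expand
\[
Q=\sum_{k_1,\dots,k_\nu}a_{k_1}\cdots a_{k_\nu}\,\mathbf{E}[\phi_{k_1}\cdots\phi_{k_\nu}],
\]
grouping the multi-indices by the partition of $\{1,\dots,\nu\}$ recording which of the $k_i$ coincide. Every term whose $\nu$ indices are pairwise distinct vanishes by the order-$\nu$ hypothesis \eqref{a13}, so only partitions possessing a block of size $\ge2$ survive; for such a partition let $p$ denote the number of singleton positions, so that $p\le\nu-2$. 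The factors attached to blocks of size $\ge2$ are estimated by the generalized Hölder inequality together with $\|\phi_k\|_\nu\le M$, contributing $M^{\nu-p}T^{(\nu-p)/2}$, while the $p$ singleton factors are reassembled into a power of $S$ and estimated by Hölder once more, contributing $\|S\|_\nu^{p}=Q^{p/\nu}$. Thus every surviving partition is bounded by $C\,M^{\nu-p}T^{(\nu-p)/2}Q^{p/\nu}$ with exponent $p/\nu<1$, and summing over the finitely many partitions and applying Young's inequality to absorb the $Q$-factors on the left yields $Q\le C_\nu M^\nu T^{\nu/2}$.

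The step I expect to be the real obstacle is the reassembly of the singleton positions. It must be carried out so as to \emph{preserve cancellation}: replacing $\sum_j a_j\phi_j$ by $\sum_j|a_j|\,|\phi_j|$ would degrade the bound to a power of $\|a\|_1$ and destroy the estimate, so one has to keep the signed sum and use $\mathbf{E}[\phi_j]=0$ to pass from the distinct-index singleton sums to genuine powers of $S$, the discrepancy consisting only of terms belonging to partitions with fewer singletons. Verifying that these discarded terms are already among the partitions under control, i.e.\ that the inclusion--exclusion over the partition lattice closes, is the combinatorial core, and it is exactly here that the evenness of $\nu$ and the order-$\nu$ multiplicativity enter.

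Finally, to pass from the moment inequality to convergence, assume $\sum_k a_k^2<\infty$ and group the indices into consecutive blocks $B_j=(n_j,n_{j+1}]$ whose $\ell^2$-masses $\varepsilon_j=\sum_{k\in B_j}a_k^2$ are chosen summable. The moment bound gives $\mathbf{E}|S_{n_{j+1}}-S_{n_j}|^\nu\le C_\nu M^\nu\varepsilon_j^{\nu/2}$, and since $\nu/2>1$ one has $\sum_j\varepsilon_j^{\nu/2}<\infty$, so Borel--Cantelli forces $S_{n_j}$ to converge along the subsequence. To close the gaps I would apply the Rademacher--Menshov bisection to the moment bound over arbitrary sub-blocks, obtaining a maximal inequality of the form $\mathbf{E}\bigl[\max_{k\in B_j}|S_k-S_{n_j}|^\nu\bigr]\le C_\nu M^\nu(\log|B_j|)^{\nu}\varepsilon_j^{\nu/2}$; the slack afforded by $\nu>2$ permits choosing the blocks so that these remain summable, whence a second Borel--Cantelli shows the oscillation over each block tends to $0$ almost surely. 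Combining the two conclusions gives the almost sure convergence of $\sum_k a_k\phi_k$, which is the assertion.
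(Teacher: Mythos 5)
The paper does not actually prove Theorem \ref{KG}: it is quoted as a classical result of Koml\'os \cite{Kom} and Gaposhkin \cite{Gap2}, and the text only records that those papers establish the Khintchin-type inequality \eqref{a14}, from which almost sure convergence follows by Stechkin's theorem (\cite{KaSa}, chap.~9.4). Your proposal reconstructs precisely this two-step route. Your first step --- the bound $\mathbf{E}[S^\nu]\le C_\nu M^\nu T^{\nu/2}$ via the partition expansion, the vanishing of the all-distinct terms by \eqref{a13}, generalized H\"older on the blocks of size $\ge 2$, reassembly of the singleton positions into powers of $S$, and absorption of the resulting factors $Q^{p/\nu}$ by Young's inequality --- is essentially Koml\'os's argument and is sound; the inclusion--exclusion over coarser partitions does close, and for this you do not in fact need the normalization $\mathbf{E}[\phi_n]=0$, which the hypothesis (products of exactly $\nu$ distinct factors) does not grant and which cannot be arranged without loss of generality.

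The genuine gap is in the passage from the moment bound to a.e.\ convergence. The count-based Rademacher--Menshov bisection yields $\mathbf{E}[\max_{k\in B_j}|S_k-S_{n_j}|^\nu]\le C_\nu M^\nu(\log|B_j|)^\nu\varepsilon_j^{\nu/2}$, and you assert that the blocks can be chosen to make these summable. They cannot, in general: making $S_{n_j}$ converge a.s.\ forces the tails $\sum_{k>n_j}a_k^2$ to decay at a definite rate, which for slowly decaying coefficients (e.g.\ $a_k^2=1/(k\log k(\log\log k)^2)$) forces $n_{j+1}$, hence $\log|B_j|$, to grow so fast that $(\log|B_j|)^\nu\varepsilon_j^{\nu/2}$ is not even bounded; the two requirements are incompatible for every choice of blocks. (Relatedly, your first Borel--Cantelli only gives $S_{n_{j+1}}-S_{n_j}\to0$ a.s., which does not imply convergence of $S_{n_j}$.) The missing idea is exactly the content of Stechkin's theorem: bisect each block by equal $\ell^2$-\emph{mass} rather than equal cardinality. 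Then the $2^l$ sub-blocks at level $l$ each carry mass $\lesssim\varepsilon_j 2^{-l}$, so the $L^\nu$ norm of the level-$l$ maximum is $\lesssim M\varepsilon_j^{1/2}2^{-l(1/2-1/\nu)}$, and summing the geometric series in $l$ --- this is where $\nu>2$ genuinely enters --- gives the log-free maximal inequality $\mathbf{E}[\max_{k\in B_j}|S_k-S_{n_j}|^\nu]\le C_\nu M^\nu\varepsilon_j^{\nu/2}$. With that in hand, choosing $n_j$ so that $\sum_{k>n_j}a_k^2\le 4^{-j}$ makes $\sum_j\|S_{n_{j+1}}-S_{n_j}\|_\nu$ and $\sum_j\mathbf{E}[\max_{k\in B_j}|S_k-S_{n_j}|^\nu]$ both finite, and the argument closes.
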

	Moreover, the papers \cite{Kom, Gap2} in fact prove a Khintchin type inequality 
	\begin{equation}\label{a14}
	\left\|\sum_{k=1}^n a_k\phi_k\right\|_\nu \le 	K(\nu) \left(\sum_{k=1}^na_k^2\right)^{1/2},
	\end{equation}
	that implies Theorem \ref{KG} according to a well-known result due to Stechkin (see \cite{KaSa}, chap 9.4). On the other hand none of those papers provide an  estimation for the Khintchin constant $K(\nu)$. A careful examination of paper \cite{Kom} may provide only $K(\nu)\lesssim \nu$ even if the norms $\|\phi_k\|_\infty$ are uniformly bounded. While for many classical examples of multiplicative systems it holds the bound $K(\nu)\lesssim \sqrt \nu$. For lacunary trigonometric systems $\sin(2\pi n_kx)$, $n_{k+1}>\lambda n_k$, $\lambda>1$, such a bound is due to Zygmund (see \cite{Zyg}, chap. 5), for the uniformly bounded martingale-differences it follows from the Azuma-Hoeffding inequality \cite{Azu, Hoe}. In the case of Rademacher independent random variables the Khintchin inequality holds with the constant 
	\begin{equation}\label{a17}
	K(p)=2^{1/2}\left(\Gamma((p+1)/2)/\pi)\right)^{1/p}, \quad p>2,
	\end{equation}
	which is known to be optimal (see \cite{Haa}, \cite{Ste}, \cite{Young}). Using Corollary \ref{C1}, the Khintchin sharp inequality for Rademacher independent random variables can be extended to general uniformly bounded multiplicative systems.
	
\begin{corollary}\label{T3}
	If a system of random variables $\phi=\{\phi_k:\, k=1,2,\ldots, n\}$ is multiplicative and $\|\phi_k\|_\infty\le 1$, then for any choice of coefficients $a_1,\ldots ,a_n$ we have
	\begin{equation}\label{a16}
	\left\|\sum_{k=1}^na_k\phi_k\right\|_p\le K(p)\left(\sum_{k=1}^na_k^2\right)^{1/2},\quad p>2,
	\end{equation}
	where $K(p)$ is the optimal constant from \eqref{a17}.
\end{corollary}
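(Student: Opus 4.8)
The plan is to obtain \eqref{a16} as a short reduction to the classical sharp Khintchin inequality for independent Rademacher variables, using Corollary \ref{C1} as the bridge. The first observation is that a multiplicative system in the sense of \eqref{a13} is exactly an ${\mathfrak M}_n$-multiplicative system, where ${\mathfrak M}_n = 2^{{\mathbb Z}_n}\setminus\{\varnothing\}$ is the family of all nonempty subsets; hence Corollary \ref{C1} is applicable with $l=n$. This is the step that converts the hypothesis on $\phi$ into the form required by the corollary.

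Next I would specialize the convex function to $\Phi(x)=|x|^p$. For $p>2$ (in fact for every $p\ge 1$) this maps ${\mathbb R}$ into ${\mathbb R}^+$ and is convex, so the hypotheses of Corollary \ref{C1} are met. Inequality \eqref{a33} then yields
\begin{equation}
{\textbf E}\left[\left|\sum_{k=1}^n a_k\phi_k\right|^p\right]\le {\textbf E}\left[\left|\sum_{k=1}^n a_k r_k\right|^p\right],
\end{equation}
where the $r_k$ are ${\mathfrak M}_n$-independent Rademacher variables, i.e. fully stochastically independent. Taking $p$-th roots converts this into the norm bound $\bigl\|\sum_{k=1}^n a_k\phi_k\bigr\|_p\le \bigl\|\sum_{k=1}^n a_k r_k\bigr\|_p$. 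Finally I would invoke the classical Khintchin inequality for independent Rademacher variables with its optimal constant, namely $\bigl\|\sum_{k=1}^n a_k r_k\bigr\|_p\le K(p)\bigl(\sum_{k=1}^n a_k^2\bigr)^{1/2}$ with $K(p)$ as in \eqref{a17}; chaining the two estimates produces \eqref{a16}.

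Since the whole argument is essentially a two-line reduction, I do not expect a substantive obstacle. The only point deserving care is the verification that the auxiliary variables supplied by Corollary \ref{C1} are genuinely independent, so that the \emph{sharp} Rademacher constant from \eqref{a17} is at our disposal; this is precisely the meaning of ${\mathfrak M}_n$-independence. Because the reduction in Corollary \ref{C1} incurs no loss in the constant (the factor $1+\mu(\phi,{\mathfrak M}_n)$ of Theorem \ref{T1} equals $1$ for a genuinely multiplicative system), the optimality of $K(p)$ in the Rademacher case is inherited by \eqref{a16}.
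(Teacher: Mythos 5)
Your proposal is correct and matches the paper's (implicit) argument exactly: the paper derives this corollary from Corollary \ref{C1} with $\Phi(x)=|x|^p$ and then applies the sharp Khintchin inequality for independent Rademacher variables with the constant \eqref{a17}. No gaps.
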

It is well known that the classical proof of the Khintchin inequality for even integers $p$ only ${\mathfrak M}_p$-independence of Rademacher functions is used. It is also known that in this case the Khintchin optimal constant is $((p-1)!!)^{1/p}$ (see \cite{Ste} or \cite{Shi} chap 2). So once again applying Corollary \ref{C1}, we can extend this result to the following inequality.
\begin{corollary}
	Let $\phi=\{\phi_k:\, k=1,2,\ldots, n\}$ be a ${\mathfrak M}_p$-multiplicative system such that $\|\phi_k\|_\infty\le 1$ and $p$ is an even integer with $2\le p\le n$. Then we have
	\begin{equation}\label{a20}
	\left\|\sum_{k=1}^na_k\phi_k\right\|_p\le K(p)\cdot \left(\sum_{k=1}^na_k^2\right)^{1/2},
	\end{equation}
	with the optimal constant $K(p)=((p-1)!!)^{1/p}$.
\end{corollary}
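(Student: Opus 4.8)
The plan is to deduce the statement from Corollary \ref{C1} by specializing the convex function and then carrying out the classical even-moment computation for Rademacher variables, checking at each step that only $\mathfrak{M}_p$-independence is invoked. First I would apply Corollary \ref{C1} with $l=p$ and the convex function $\Phi(x)=|x|^p$ (convex since $p\ge 1$). Since $\phi$ is $\mathfrak{M}_p$-multiplicative with $\|\phi_k\|_\infty\le 1$, the hypotheses are met, and the corollary yields
\begin{equation*}
\textbf{E}\left[\left|\sum_{k=1}^n a_k\phi_k\right|^p\right]\le \textbf{E}\left[\left|\sum_{k=1}^n a_kr_k\right|^p\right],
\end{equation*}
where $r_1,\ldots,r_n$ are Rademacher $\mathfrak{M}_p$-independent random variables. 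Thus it remains to prove the sharp Khintchin inequality for such $r_k$, namely that the right-hand side is at most $(p-1)!!\left(\sum_k a_k^2\right)^{p/2}$.

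Next I would expand the right-hand side by the multinomial theorem. Writing $p=2q$,
\begin{equation*}
\textbf{E}\left[\left(\sum_{k=1}^n a_kr_k\right)^p\right]=\sum_{j_1+\cdots+j_n=p}\frac{p!}{j_1!\cdots j_n!}\left(\prod_{k=1}^n a_k^{j_k}\right)\textbf{E}\left[\prod_{k=1}^n r_k^{j_k}\right].
\end{equation*}
The crucial observation is that in each surviving term at most $p$ distinct variables occur: after reducing $r_k^{j_k}$ to $1$ (when $j_k$ is even) or $r_k$ (when $j_k$ is odd), the product $\prod_k r_k^{j_k}$ equals $\prod_{k\in S}r_k$, where $S=\{k:j_k\text{ odd}\}$ has cardinality $|S|\le p$, so $S\in\mathfrak{M}_p$. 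Hence $\mathfrak{M}_p$-independence gives $\textbf{E}[\prod_{k\in S}r_k]=\prod_{k\in S}\textbf{E}[r_k]=0$ whenever $S\ne\varnothing$, while if every $j_k$ is even the product is identically $1$. Only the all-even multi-indices $j_k=2m_k$ with $\sum_k m_k=q$ survive.

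Finally I would compare the surviving sum term-by-term with the multinomial expansion of $\left(\sum_k a_k^2\right)^q$. The coefficient of $\prod_k a_k^{2m_k}$ on the Rademacher side is $p!/\prod_k(2m_k)!$, while on the target side it is $(p-1)!!\cdot q!/\prod_k m_k!$. Using $(p-1)!!=(2q)!/(2^q q!)$, the desired coefficientwise bound reduces to $\prod_k(2m_k)!\ge 2^q\prod_k m_k!$, and this follows from the elementary identity $(2m)!=2^m\,m!\,(2m-1)!!\ge 2^m\,m!$ applied to each factor. Summing over the surviving multi-indices and taking $p$-th roots yields \eqref{a20}. For optimality I would note that the Rademacher system itself is $\mathfrak{M}_p$-multiplicative with $\|r_k\|_\infty=1$, so the constant cannot be improved within this class, matching the known optimal value $((p-1)!!)^{1/p}$. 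I expect the only delicate point to be the cardinality bound $|S|\le p$, which is precisely what makes $\mathfrak{M}_p$-independence (rather than full independence) sufficient; the remaining combinatorics is routine.
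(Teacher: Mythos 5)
Your proposal is correct and follows essentially the same route as the paper: apply Corollary \ref{C1} with $\Phi(x)=|x|^p$ and then invoke the classical even-integer Khintchin computation, which uses only ${\mathfrak M}_p$-independence of the Rademacher variables. The paper simply cites this classical fact (Stechkin, Peshkir--Shiryaev) where you write out the multinomial expansion and the coefficientwise comparison explicitly, and your details check out.
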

Applying Theorem \ref{T1}, we also prove the following generalization of a well-known martingale inequality due to Azuma-Hoeffding \cite{Azu, Hoe}. 
\begin{theorem}\label{T2}
	If a system  of random variables $\phi=\{\phi_k:\, k=1,2,\ldots, n\}$ satisfies \eqref{a28}, then it holds the inequality
	\begin{equation}\label{a21}
	\left|\left\{\sum_{k=1}^n\phi_k>\lambda\right\}\right|\le (1+\mu(\phi,{\mathfrak M}_n))\exp\left(-\frac{2\lambda^2}{\sum_{k=1}^n(B_k-A_k)^2}\right),\quad \lambda>0.
	\end{equation}
\end{theorem}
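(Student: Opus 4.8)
The plan is to run the classical exponential-moment (Chernoff) argument behind the Azuma--Hoeffding inequality, using Theorem \ref{T1} to pass from the given system $\phi$ to a system of independent two-valued random variables at no cost beyond the factor $1+\mu(\phi,{\mathfrak M}_n)$. First I would fix $t>0$ and use the elementary bound $\mathbf 1_{\{s>\lambda\}}\le e^{t(s-\lambda)}$ with $s=\sum_{k=1}^n\phi_k$ to obtain the Markov-type estimate
\begin{equation}
\left|\left\{\sum_{k=1}^n\phi_k>\lambda\right\}\right|\le e^{-t\lambda}\,\textbf{E}\left[\exp\left(t\sum_{k=1}^n\phi_k\right)\right].
\end{equation}
Since $\Phi(x)=e^{tx}$ is convex, Theorem \ref{T1} applies with coefficients $a_k=1$ and the choice $l=n$, yielding
\begin{equation}
\textbf{E}\left[\exp\left(t\sum_{k=1}^n\phi_k\right)\right]\le (1+\mu(\phi,{\mathfrak M}_n))\,\textbf{E}\left[\exp\left(t\sum_{k=1}^n\xi_k\right)\right],
\end{equation}
where the $\xi_k$ are $\{A_k,B_k\}$-valued, mean zero, and ${\mathfrak M}_n$-independent; since ${\mathfrak M}_n=2^{{\mathbb Z}_n}\setminus\{\varnothing\}$, the variables $\xi_k$ are in fact fully independent.

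Next I would exploit this independence to factor the exponential moment as $\textbf{E}\left[\exp\left(t\sum_k\xi_k\right)\right]=\prod_{k=1}^n\textbf{E}\left[e^{t\xi_k}\right]$, reducing the problem to a single two-point variable. For each $k$, $\xi_k$ is mean zero and takes only the values $A_k$ and $B_k$, so Hoeffding's lemma gives the sharp bound $\textbf{E}\left[e^{t\xi_k}\right]\le \exp\left(t^2(B_k-A_k)^2/8\right)$; for a two-valued distribution the convexity step in Hoeffding's lemma is an identity, so only the standard one-variable calculus estimate is needed. Multiplying these bounds over $k$ yields
\begin{equation}
\textbf{E}\left[\exp\left(t\sum_{k=1}^n\xi_k\right)\right]\le \exp\left(\frac{t^2}{8}\sum_{k=1}^n(B_k-A_k)^2\right).
\end{equation}

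Finally I would combine the three displays and optimize in $t$. Writing $S=\sum_{k=1}^n(B_k-A_k)^2$, the bound reads $(1+\mu(\phi,{\mathfrak M}_n))\exp\left(-t\lambda+t^2S/8\right)$, and the choice $t=4\lambda/S$ minimizes the exponent, producing exactly $-2\lambda^2/S$ and hence \eqref{a21}. I do not expect a genuine obstacle here: the essential difficulty, namely removing the dependence structure of $\phi$ without altering constants, has already been absorbed into Theorem \ref{T1}. The only points requiring care are verifying that Hoeffding's lemma is applied to mean zero variables supported exactly on the two endpoints $\{A_k,B_k\}$ (so that no slack is introduced into the constant $2$ in the exponent) and the bookkeeping ensuring the factor $1+\mu(\phi,{\mathfrak M}_n)$ is carried through unchanged.
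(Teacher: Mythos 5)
Your proposal is correct and follows essentially the same route as the paper's proof: a Chernoff/Markov exponential-moment bound, an application of Theorem \ref{T1} with $\Phi(x)=e^{tx}$, $a_k=1$, $l=n$ to pass to the independent $\{A_k,B_k\}$-valued system, factorization of the moment generating function, Hoeffding's lemma for each two-point variable, and optimization at $t=4\lambda/\sum_k(B_k-A_k)^2$. No discrepancies to report.
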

\begin{corollary}\label{C6}
	If $\phi=\{\phi_k:\, k=1,2,\ldots, n\}$ is multiplicative and satisfies \eqref{a28}, then
	\begin{equation}\label{a34}
	\left|\left\{\sum_{k=1}^n\phi_k>\lambda\right\}\right|\le \exp\left(-\frac{2\lambda^2}{\sum_{k=1}^n(B_k-A_k)^2}\right),\quad \lambda>0.
	\end{equation}
\end{corollary}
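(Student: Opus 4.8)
The plan is to derive this directly from Theorem \ref{T2} by showing that full multiplicativity forces the error term $\mu(\phi,{\mathfrak M}_n)$ to vanish. Recall that a multiplicative system is precisely one for which \eqref{a13} holds for \emph{every} choice of indices $n_1<n_2<\cdots<n_\nu$, and that ${\mathfrak M}_n=2^{{\mathbb Z}_n}\setminus\{\varnothing\}$ is the family of all nonempty subsets of ${\mathbb Z}_n$. Hence each set $\{n_1,\ldots,n_\nu\}\in{\mathfrak M}_n$ contributes a factor $\left|\textbf{E}[\phi_{n_1}\cdots\phi_{n_\nu}]\right|=0$ to the sum \eqref{a30}.

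First I would observe that every summand in the definition \eqref{a30} of $\mu(\phi,{\mathfrak M}_n)$ is therefore zero, so that $\mu(\phi,{\mathfrak M}_n)=0$; the normalizing denominators $C_{n_1}\cdots C_{n_\nu}$ are all strictly positive by \eqref{a28}, so no summand is indeterminate. Consequently the prefactor $1+\mu(\phi,{\mathfrak M}_n)$ appearing in \eqref{a21} equals $1$. Substituting this into the conclusion \eqref{a21} of Theorem \ref{T2} yields exactly \eqref{a34}.

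There is no genuine obstacle at this stage: the entire analytic content — the exponential Azuma--Hoeffding-type bound, obtained via Theorem \ref{T1} applied to $\Phi(x)=e^{tx}$ together with the usual Chernoff optimization in $t$, and the reduction to mean-zero two-valued independent variables encoded in the $(1+\mu)$ factor — is already carried by Theorem \ref{T2}. The only role of the corollary is to record that the hypothesis of (full) multiplicativity annihilates the correction term, replacing $1+\mu(\phi,{\mathfrak M}_n)$ by $1$.
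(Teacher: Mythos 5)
Your proposal is correct and coincides with the paper's (implicit) argument: the corollary is stated immediately after Theorem \ref{T2} precisely because full multiplicativity makes every summand in \eqref{a30} vanish, so $\mu(\phi,{\mathfrak M}_n)=0$ and the prefactor $1+\mu$ in \eqref{a21} reduces to $1$. Nothing further is needed.
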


	In the proof of the main result we use some arguments used  in the papers \cite{Kar1, Kar2} that is a transformation of a bounded orthogonal system into a system of two-valued functions in the study of certain problems. 1) First, we extend functions $\phi_k$ up to the interval $[0,1+\mu)$, where the new system becomes ${\mathfrak M}_l$-multiplicative (Lemma \ref{L0}), 2)  an approximation procedure applied by Lemma \ref{L1} reduces our problem to the case of step functions, 3) and those after a certain transformation procedure, giving by Lemma \ref{L2}, produce ${\mathfrak M}_l$-multiplicative system of $\{A_k,B_k\}$-valued functions,
4) a dilation of the interval $[0,1+\mu)$ back to $[0,1)$ finalizes the proof, generating the constant $1+\mu$ in \eqref{a12}, since any ${\mathfrak M}_l$-multiplicative system of two-valued functions is ${\mathfrak M}_l$-independent (Lemma \ref{L3}).

The paper is organized as follows. In Section \ref{S2} we have collected preliminary lemmas. Section \ref{S3} provides the proofs of the main results. In Section \ref{S4} we give applications concerning sub-Gausian estimations for lacunary subsequences of trigonometric systems.

\section{Preliminary lemmas}\label{S2}
A real-valued  function $f$ defined on $[a,b)$ is said to be a step function if it can be written as a finite linear combination of indicator functions of intervals $[\alpha,\beta)\subset [a,b)$.
\begin{lemma}\label{00}
	For any integer $n\ge 2$ and interval $I=[a,b)$ there exists a system of unimodular step functions $f_k(x)$, $k=1,2,\ldots,n$, on $I$ such that 
	\begin{align}
		&\prod_{k=1}^nf_k(x)=1,\quad x\in I,\label{x2}\\
		&\int_I\left(\prod_{k\in U}f_k\right)=0.\label{x1}
	\end{align}
for any nonempty $U\subsetneq \{1,2,\ldots, n\}$.
\end{lemma}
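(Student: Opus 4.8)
The plan is to reduce to the case $I=[0,1)$ by an affine change of variables $x\mapsto a+(b-a)x$, which preserves the unimodular step-function property and merely rescales integrals by the positive constant $b-a$, so that the vanishing conditions \eqref{x1} are unaffected. Over $[0,1)$ I would build the system explicitly from the classical Rademacher functions. Recall that the Rademacher functions $r_1,r_2,\ldots$ on $[0,1)$ are $\{-1,+1\}$-valued step functions for which $\int_0^1\prod_{k\in S}r_k=0$ whenever $S$ is a nonempty finite set of indices. I would take $n-1$ of them, $r_1,\ldots,r_{n-1}$, set $f_k=r_k$ for $1\le k\le n-1$, and define the last function as the product of all the preceding ones, $f_n=\prod_{j=1}^{n-1}r_j$. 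Each $f_k$ is then a unimodular step function on $[0,1)$, as required.

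Verifying \eqref{x2} is immediate: since $r_j^2\equiv 1$, one gets $\prod_{k=1}^n f_k=\bigl(\prod_{j=1}^{n-1}r_j\bigr)^2\equiv 1$. For \eqref{x1}, fix a nonempty proper subset $U\subsetneq\{1,\ldots,n\}$ and split into two cases. If $n\notin U$, then $\prod_{k\in U}f_k=\prod_{k\in U}r_k$ is a product over a nonempty subset of $\{1,\ldots,n-1\}$, hence integrates to $0$. If $n\in U$, write $U=V\cup\{n\}$ with $V\subseteq\{1,\ldots,n-1\}$ and let $W=\{1,\ldots,n-1\}\setminus V$; using $r_j^2\equiv 1$ we obtain $\prod_{k\in U}f_k=\bigl(\prod_{k\in V}r_k\bigr)\bigl(\prod_{j=1}^{n-1}r_j\bigr)=\prod_{k\in W}r_k$. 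Since $U\ne\{1,\ldots,n\}$ forces $V\ne\{1,\ldots,n-1\}$, the set $W$ is nonempty, so this last integral is again $0$.

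The only genuinely substantive point is the choice of construction: taking $f_n$ to be the product of the remaining functions is exactly what converts the standard Rademacher orthogonality (under which the \emph{full} product would integrate to $0$) into the required identity $\prod_k f_k\equiv 1$, while the involution sending $V$ to its complement $W$ in $\{1,\ldots,n-1\}$ keeps every proper-subset product inside the orthogonal range. The remaining work — the reduction to $[0,1)$ and the two integral computations — is routine, and the properness hypothesis on $U$ is used precisely to guarantee that $W\ne\varnothing$ in the second case, which is where any difficulty would otherwise appear.
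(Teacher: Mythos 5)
Your proof is correct, but it takes a genuinely different route from the paper's. The paper builds the system by an iterative sign-flipping procedure: starting from $f_k\equiv 1$, it runs through the nonempty proper subsets $V$ one at a time, picks $l\in V$ and $m\notin V$, and flips the signs of both $f_l$ and $f_m$ on the right half of each constancy interval; flipping exactly two functions preserves the full product $\prod_k f_k\equiv 1$, flipping exactly one function appearing in $\prod_{k\in V}f_k$ forces that integral to vanish, and one must then check that the previously arranged vanishing conditions survive each later modification. Your construction is instead explicit and closed-form: $f_k=r_k$ for $k\le n-1$ and $f_n=\prod_{j=1}^{n-1}r_j$, so that the full product telescopes to $1$ via $r_j^2\equiv 1$, and every proper-subset product is identified with a Walsh function $\prod_{k\in W}r_k$ over a \emph{nonempty} $W\subseteq\{1,\ldots,n-1\}$ (either $W=U$ when $n\notin U$, or $W$ the complement of $U\setminus\{n\}$ when $n\in U$), whence the integral vanishes by standard Rademacher orthogonality. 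What your approach buys is brevity and a verification that is a clean computation in the multiplicative group of Walsh functions, with no induction and no ``one can check'' step; what the paper's approach buys is self-containedness (it does not import the Rademacher system, though that orthogonality is itself elementary) and a greedy-modification template that echoes the constructions used elsewhere in the paper. Your affine reduction from $[a,b)$ to $[0,1)$ and the observation that properness of $U$ is exactly what guarantees $W\ne\varnothing$ are both handled correctly.
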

\begin{proof}
	Suppose unimodular step functions $f_k$, $k=1,2,\ldots,n$, satisfy \eqref{x2} and let $V\subsetneq \{1,2,\ldots, n\}$ be nonempty. Chose $m\in \{1,2,\ldots,n\}\setminus V$ and $l\in V$ arbitrarily. Let $J\subset I$ be a maximal constancy interval of functions $f_k$, $J^-$ and $J^+$ be the left and right halves of $J$. We redefine $f_l$ and $f_m$, changing their signs on $I^+$. We do so with each interval of constancy. Clearly, at the end of this procedure we will have 
	\begin{equation*}
		\int_I\left(\prod_{k\in V}f_k\right)=0.
	\end{equation*}
Moreover, one can check if \eqref{x1} is satisfied for some $U$ before the reconstruction of the functions, then so we will also have after the reconstruction. Starting with functions $f_k(x)=1$, $k=1,2,\ldots$, we apply this procedure for every $U\subsetneq \{1,2,\ldots, n\}$. In this way we get functions satisfying the conditions of lemma.
\end{proof}
\begin{lemma}\label{L0}
	Let $\phi=\{\phi_k:\, k=1,2,\ldots,n\}$ be a system of measurable functions on $[0,1)$ satisfying \eqref{a28} and ${\mathfrak M}$ be an arbitrary family of subsets of ${\mathbb Z}_n$. Then the functions $\phi_k$ can be extended up to the interval $[0,1+\mu)$ such that
	\begin{align}
	&A_k\le \phi_k(x)\le B_k\text { if } x\in [0,1+\mu),\label{a26}\\
	&\int_0^{1+\mu}\phi_{n_1}\phi_{n_2}\ldots \phi_{n_\nu}=0\text { for all } \{n_1,\ldots,n_\nu\}\in {\mathfrak M},\label{a27}
	\end{align}
	where $\mu=\mu(\phi,{\mathfrak M})$ is the multiplicative error \eqref{a30}.  Moreover, each $\phi_k$ is a step function on $[1,1+\mu)$.
	\end{lemma}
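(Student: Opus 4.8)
The plan is to leave each $\phi_k$ unchanged on $[0,1)$ and to append on $[1,1+\mu)$ a family of corrections that cancel, one index-subset at a time, the nonzero expectations $\textbf{E}[\phi_{n_1}\cdots\phi_{n_\nu}]$ without disturbing any of the others. First I would partition $[1,1+\mu)$ into half-open blocks $I_S$, one for each $S=\{n_1,\ldots,n_\nu\}\in\mathfrak{M}$ having $\textbf{E}[\phi_{n_1}\cdots\phi_{n_\nu}]\neq 0$, of length $|I_S|=\mu_S:=\frac{1}{C_{n_1}\cdots C_{n_\nu}}\bigl|\textbf{E}[\phi_{n_1}\cdots\phi_{n_\nu}]\bigr|$; this is possible because $\sum_S\mu_S=\mu$ by \eqref{a30}, and subsets with vanishing expectation need no correction.

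On a block $I_S$ with $\nu=|S|\ge 2$ I would invoke Lemma \ref{00} on the interval $I=I_S$ to obtain unimodular step functions $f_1,\ldots,f_\nu$, and set $\phi_{n_i}(x)=c_i f_i(x)$ for $x\in I_S$, where $|c_i|=C_{n_i}$ and the signs are chosen so that $c_1\cdots c_\nu=-\operatorname{sign}\bigl(\textbf{E}[\phi_{n_1}\cdots\phi_{n_\nu}]\bigr)\,C_{n_1}\cdots C_{n_\nu}$; for every index $k\notin S$ I would put $\phi_k\equiv 0$ on $I_S$. The singleton blocks $S=\{n_1\}$ are handled directly (since Lemma \ref{00} requires $n\ge 2$): set $\phi_{n_1}$ equal to the constant $-\operatorname{sign}(\textbf{E}[\phi_{n_1}])\,C_{n_1}$ on $I_S$ and all other functions to $0$. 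Because $0\in[A_k,B_k]$ and $\pm C_k\in[A_k,B_k]$ by $C_k=\min\{-A_k,B_k\}$, the pointwise bound \eqref{a26} holds throughout $[0,1+\mu)$, and each $\phi_k$ is visibly a step function on $[1,1+\mu)$.

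The verification of \eqref{a27} is where the two properties of Lemma \ref{00} pull their weight. Fixing $T\in\mathfrak{M}$, I would write $\int_0^{1+\mu}\prod_{k\in T}\phi_k=\textbf{E}\bigl[\prod_{k\in T}\phi_k\bigr]+\sum_S\int_{I_S}\prod_{k\in T}\phi_k$. On a block $I_S$ with $T\not\subseteq S$ some factor $\phi_k$ with $k\in T\setminus S$ is identically $0$, so that contribution vanishes; on a block $I_S$ with $T\subsetneq S$ the integral factors as a constant times $\int_{I_S}\prod_{i:\,n_i\in T}f_i$, which is $0$ by the orthogonality \eqref{x1} since the relevant $f$-indices form a nonempty proper subset. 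Hence only the block $S=T$ survives (and if $\textbf{E}[\prod_{k\in T}\phi_k]=0$ there is no such block and the whole sum is $0$, matching the vanishing expectation). On $I_T$ the product identity \eqref{x2} gives $\int_{I_T}\prod_{k\in T}\phi_k=(c_1\cdots c_\nu)\,\mu_T$, which by the choice of magnitudes and signs equals $-\textbf{E}[\prod_{k\in T}\phi_k]$; the two terms cancel and \eqref{a27} follows.

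The main obstacle is precisely this non-interference, namely arranging that the correction inserted to kill the error of one subset $S$ leaves every other integral intact. This is exactly what the conjunction of the two conditions in Lemma \ref{00} secures: the identically-one full product turns the $S=T$ correction into a clean constant multiple of $\mu_T$, so the cancellation can be tuned exactly through the choice of the $c_i$, while the vanishing of all proper subproducts annihilates the cross terms with $T\subsetneq S$, the remaining cross terms being removed by the choice $\phi_k\equiv 0$ off $S$. Beyond this, the only points needing care are the separate treatment of singletons and the routine check that $\sum_S\mu_S=\mu$, so that the blocks exactly tile $[1,1+\mu)$.
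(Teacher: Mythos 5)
Your proposal is correct and follows essentially the same route as the paper: partition $[1,1+\mu)$ into blocks indexed by the subsets in $\mathfrak{M}$ with lengths equal to the normalized errors, use Lemma \ref{00} on each block with signs tuned so the full product cancels the corresponding expectation, set the functions to zero off their block's index set, and verify \eqref{a27} by the same three-case analysis. Your explicit treatment of singleton blocks (where Lemma \ref{00} does not apply since it requires $n\ge 2$) is a small point the paper glosses over, but it does not change the argument.
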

\begin{proof}
Set
	\begin{equation}\label{a51}
	\delta_{n_1,n_2,\ldots,n_\nu}=\frac{1}{C_{n_1}C_{n_2}\ldots C_{n_\nu}}\left|\int_0^1\phi_{n_1} \phi_{n_2}\ldots \phi_{n_\nu}\right|.
	\end{equation}
Divide $[1,1+\mu)$ into intervals $I_{n_1,n_2,\ldots,n_\nu}$ of lengths $\delta_{n_1,n_2,\ldots,n_\nu}$ considering only the collections $\{n_1, n_2, \ldots, n_\nu\}\in {\mathfrak M}$. We define functions $\phi_m$ on such an interval $I=I_{n_1,n_2,\ldots,n_\nu}$ as follows. If $m\notin \{n_1,n_2,\ldots ,n_\nu\}$, then we let $\phi_n= 0$ on $I$. Applying Lemma \ref{00}, one can define the functions $\phi_{n_1}, \phi_{n_2},\ldots,\phi_{n_\nu}$ such that $|\phi_{n_j}(x)|=	C_{n_j},\, x\in I,\quad j=1,2,\ldots,\nu,$ and 
	\begin{align}
 &\prod_{j=1}^\nu\phi_{n_j}(x)=-C_{n_1}\ldots C_{n_\nu}\cdot {\rm sign\,}\left(\int_0^1\phi_{n_1} \phi_{n_2}\ldots \phi_{n_\nu}\right),\quad x\in I,\label{x3}\\
 &\int_I\left(\prod_{j\in U}\phi_{j}\right)=0,\quad U\subsetneq\{n_1,n_2,\ldots,n_\nu\}.\label{x4}
	\end{align}
Obviously, this correctly determines the functions $\phi_k$  on $ [1,1+\mu)$ and those satisfy $A_k\le \phi_k(t)\le B_k$ for all $t\in [0,1+\mu)$. From  \eqref{a51} and \eqref{x3} it follows that
	\begin{equation}\label{a23}
	\int_{I}\phi_{n_1}\phi_{n_2} \ldots \phi_{n_\nu}=-\int_0^1\phi_{n_1} \phi_{n_2}\ldots \phi_{n_\nu}.
	\end{equation}
	One can also check that
	\begin{equation}\label{x10}
			\int_{I_{m_1,\ldots,m_l}}\phi_{n_1}\phi_{n_2}\ldots \phi_{n_\nu}=0,\text { if }\{m_1,\ldots,m_l\}\neq \{n_1,n_2,\ldots ,n_\nu\}.
	\end{equation}
Indeed, if there is a $n_k\notin \{m_1,\ldots,m_l\}$, then by definition $\phi_{n_k}=0$ on $I_{m_1,\ldots,m_l}$ and \eqref{x10} follows. Otherwise we will have $\{n_1,n_2,\ldots ,n_\nu\}\subsetneq \{m_1,\ldots,m_l\}$ and \eqref{x10} follows from \eqref{x4}. By \eqref{x10} we get
	\begin{equation}\label{x5}
		\int_{[1,1+\mu)\setminus I}\phi_{n_1}\phi_{n_2}\ldots \phi_{n_\nu}=0.
	\end{equation}
	From \eqref{a23} and \eqref{x5} we obtain
	\begin{equation}
	\int_0^{1+\mu }\phi_{n_1}\phi_{n_2}\ldots  \phi_{n_\nu}=0, \quad \{n_1, n_2, \ldots, n_\nu\}\in {\mathfrak M},
	\end{equation}
	which completes the proof of lemma.	
\end{proof}

\begin{lemma}\label{L1}
	Let measurable functions $\phi_k$, $k=1,2,\ldots,n$, defined on $[a,b)$, satisfy \eqref{a28}. Then for any $\delta>0$ one can find step functions $f_k$, $k=1,2,\ldots, n$, on  $[a,b)$ with $A_k\le f_k\le B_k$ such that
	\begin{equation}\label{a7}
	|\{|\phi_k-f_k|>\delta\}|<\delta,\quad k=1,2,\ldots,n,
	\end{equation}
	and 
	\begin{equation}\label{a31}
	\int_a^bf_{n_1}f_{n_2}\ldots f_{n_\nu}=0\text { as } \{n_1,n_2,\ldots,n_\nu\}\in {\mathfrak M},
	\end{equation}
where ${\mathfrak M}$ is the family of collections $\{n_1,n_2,\ldots,n_\nu\}$ satisfying \eqref{a13}.
\end{lemma}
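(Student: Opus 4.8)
The plan is to produce the $f_k$ in two stages: a rough step approximation that is accurate in measure, followed by a localized repair that makes the finitely many product integrals vanish \emph{exactly}. I would start from the standard fact that a bounded measurable function is approximable in measure by step functions within its own bounds: choose step functions $g_k$ with $A_k\le g_k\le B_k$ and $|\{|\phi_k-g_k|>\delta/2\}|$ as small as wanted. Boundedness turns in-measure closeness into $L^1$-closeness, so each product $g_{n_1}\cdots g_{n_\nu}$ tends to $\phi_{n_1}\cdots\phi_{n_\nu}$ in $L^1$; since $\int_a^b\phi_{n_1}\cdots\phi_{n_\nu}=0$ for every $S=\{n_1,\ldots,n_\nu\}\in\mathfrak{M}$, I can make all residuals $\eta_S:=\int_a^b g_{n_1}\cdots g_{n_\nu}$ simultaneously smaller than any prescribed threshold. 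This reduces the lemma to repairing finitely many arbitrarily small residuals while altering each $g_k$ only on a set of small measure.

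For the repair I would borrow the device of Lemma \ref{L0}. For each $S\in\mathfrak{M}$ I reserve a short subinterval $I_S\subset[a,b)$, the $I_S$ pairwise disjoint, set $f_k=g_k$ off $\bigcup_S I_S$, and on $I_S$ redefine the functions exactly as there: $f_m\equiv 0$ for $m\notin S$, and, via Lemma \ref{00}, the $f_k$ with $k\in S$ two-valued with $|f_k|=C_k$, so that every proper sub-product integrates to zero over $I_S$ (as in \eqref{x4}) while $\prod_{k\in S}f_k$ equals a constant $\pm\prod_{k\in S}C_k$ of the sign I select. The vanishing of the off-collection functions yields the decoupling \eqref{x10}: over $I_T$ the product $\prod_{k\in S}f_k$ integrates to zero unless $T=S$. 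Consequently $\int_a^b\prod_{k\in S}f_k=\eta_S-\sum_T\int_{I_T}\prod_{k\in S}g_k+\int_{I_S}\prod_{k\in S}f_k$, and I fix the sign and the length of $I_S$ so that $\int_{I_S}\prod_{k\in S}f_k$ annihilates the first two terms.

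The main obstacle is that this last choice is implicit rather than explicit: overwriting on $I_S$ not only supplies the missing $S$-integral but also destroys the pre-existing product mass $\sum_T\int_{I_T}\prod_{k\in S}g_k$, and these removed masses couple the different collections to one another. The crux of the proof is to solve the resulting relations for lengths $|I_S|$ that are all small. I would do this by first driving the residuals $\eta_S$ far below the measure budget $\delta$, so that the leading balance is $\int_{I_S}\prod_{k\in S}f_k\approx-\eta_S$, and then absorbing the coupled removed masses with the remaining freedom—either by taking the initial approximation fine enough that $\sum_T|I_T|$ stays negligible against the leading term, or, in the worst case where placing all $I_S$ in one constancy cell would over-amplify the corrections, by distributing the $I_S$ among several cells so that the removed masses partially cancel. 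Once $\sum_S|I_S|<\delta/2$ is secured, $f_k$ differs from $g_k$ only on $\bigcup_S I_S$, a set of measure $<\delta/2$, which gives \eqref{a7}; the bounds $A_k\le f_k\le B_k$ and the step-function property hold by construction, and the decoupling \eqref{x10} makes \eqref{a31} hold exactly.
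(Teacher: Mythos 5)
Your overall architecture matches the paper's up to a point: approximate $\phi_k$ in measure by step functions within the same bounds, observe that the finitely many product integrals then have arbitrarily small residuals, and kill those residuals exactly using the two-valued blocks of Lemma \ref{00} (sub-products vanish as in \eqref{x4}, the full product is a constant of prescribed sign as in \eqref{x3}, off-collection functions are set to $0$ so that \eqref{x10} holds). The divergence — and the gap — is in where the correction blocks live. The paper \emph{appends} them: it invokes Lemma \ref{L0} to extend $u_k$ to $[0,1+\mu)$, where $\mu=\mu(u,{\mathfrak M})$ is small by \eqref{a6}, and then dilates back to $[0,1)$ by $f_k(x)=u_k((1+\mu)x)$. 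Because the blocks sit outside $[0,1)$, no pre-existing product mass is destroyed, each block's length is given explicitly by \eqref{a51}, and there is nothing to solve; the only cost is the small $L^1$ perturbation caused by the dilation. You instead carve the blocks $I_S$ out of $[a,b)$ itself, which removes the mass $\int_{I_T}\prod_{k\in S}g_k$ for every pair $(S,T)$ and produces the coupled implicit system for the lengths $|I_S|$ that you correctly identify as the crux.

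You do not actually solve that system, and the remedies you sketch do not close it. The equation forces $|I_S|\prod_{k\in S}C_k=\bigl|-\eta_S+\sum_T\int_{I_T}\prod_{k\in S}g_k\bigr|$, so the corrective power of $I_S$ for the collection $S$ is $\prod_{k\in S}C_k$ per unit length, while the interference of each $I_T$ on $S$ can be as large as $\prod_{k\in S}\max(|A_k|,B_k)$ per unit length, and there are up to $2^n$ collections. The natural contraction/self-map estimate therefore carries a constant of order $2^n\prod_k\max(|A_k|,B_k)/\min_S\prod_{k\in S}C_k$, which need not be below $1$ (the $C_k=\min\{-A_k,B_k\}$ of \eqref{a30} can be far smaller than $\max\{-A_k,B_k\}$). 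Driving the $\eta_S$ down first does not help: the system is essentially homogeneous in $(\eta,\ell)$, so shrinking the residuals rescales the putative solution without making the fixed-point problem any more tractable; and ``distributing the $I_S$ among several cells'' is not an argument. The clean repair is precisely the paper's maneuver: place the blocks on an added interval $[1,1+\mu)$ so the corrections are completely decoupled, then rescale — at which point your construction becomes the paper's proof.
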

\begin{proof}
Without loss of generality we can suppose that $[a,b)=[0,1)$. For any $\varepsilon >0$ one can find step functions $u=\{u_k\}$, $A_k\le u_k\le B_k$, such that 
	\begin{align}
	&\int_0^1|\phi_k-u_k|<\varepsilon,\label{x6}\\
	&\left|\int_0^1u_{n_1} u_{n_2}\ldots u_{n_\nu}\right|<\varepsilon\text { as } \{n_1,n_2,\ldots,n_\nu\}\in {\mathfrak M}.\label{a6}
	\end{align}
	Let $\mu=\mu(u,{\mathfrak M})$ be the multiplicative error of the system $u$. Applying Lemma \ref{L0}, the functions $u_k$ can be extend to the step functions on $[0,1+\mu)$ such that  $A_k\le u_k(t)\le B_k$, $t\in [0,1+\mu)$, and
	\begin{equation*}
	\int_0^{1+\mu}u_{n_1}u_{n_2}\ldots u_{n_\nu}=0\text { for all } \{n_1,\ldots,n_\nu\}\in {\mathfrak M}.
	\end{equation*}	
	Set
	\begin{equation}
	f_k(x)=u_k\left((1+\mu)x\right),\quad x\in [0,1).
	\end{equation}
	Obviously, \eqref{a31} will be satisfied. Then, using \eqref{a6} one can get a small $\mu$ by choosing a small enough $\varepsilon$. So by \eqref{x6} we can write
	\begin{align}
	\int_0^1|f_k(x)-\phi_k(x)|dx&\le \int_0^1|f_k(x)-u_k(x)|dx+\int_0^1|\phi_k(x)-u_k(x)|dx\\
	&<\int_0^1|u_k\left((1+\mu)x\right)-u_k(x)|dx+\varepsilon <\delta^2.
	\end{align}
	Applying Chebyshev's inequality, we get \eqref{a7}. Lemma is proved. 
\end{proof}

\begin{lemma}\label{L2}
	Let $f_1,f_2,\ldots,f_n$ be real-valued step functions on $[a,b)$ satisfying $A_k\le f_k(x)\le B_k$. Then there are $\{A_k,B_k\}$-valued step functions $g_1,g_2,\ldots,g_n$ on $[a,b)$ such that 
\begin{equation}\label{a8}
	\int_a^bg_{n_1} g_{n_2} \ldots g_{n_\nu}=\int_a^bf_{n_1}f_{n_2}\ldots  f_{n_\nu},
\end{equation}
for any choice of $\{n_1,n_2,\ldots,n_\nu\}\in {\mathbb Z}_n$, and for any convex function $\Phi:{\mathbb R}\to {\mathbb R}^+$ it holds the inequality
\begin{equation}\label{a24}
\int_a^b\Phi\left(\sum_{k=1}^na_kf_k\right)\le \int_a^b\Phi\left(\sum_{k=1}^na_kg_k\right)
\end{equation}
for any coefficients $a_k$.
\end{lemma}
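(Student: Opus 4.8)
The plan is to build the functions $g_k$ separately on each interval of a common refinement, exploiting the fact that on such an interval every $f_k$ is constant, and to arrange that there the $g_k$ behave like \emph{independent} two-valued functions with the correct means. This single device delivers both \eqref{a8} and \eqref{a24} at once.

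First I would fix a finite partition of $[a,b)$ into intervals $J$ on which all the step functions $f_1,\ldots,f_n$ are simultaneously constant, say $f_k\equiv c_k=c_k(J)$ with $A_k\le c_k\le B_k$. On a fixed such interval $J$ set $p_k=(c_k-A_k)/(B_k-A_k)\in[0,1]$, so that $c_k=p_kB_k+(1-p_k)A_k$. I would then subdivide $J$ into $2^n$ subintervals $I_\sigma$ indexed by $\sigma\in\{0,1\}^n$, of lengths $|I_\sigma|=|J|\prod_{k:\sigma_k=1}p_k\prod_{k:\sigma_k=0}(1-p_k)$, and declare $g_k=B_k$ on $I_\sigma$ if $\sigma_k=1$ and $g_k=A_k$ if $\sigma_k=0$. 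By construction each $g_k$ is a $\{A_k,B_k\}$-valued step function, it equals $B_k$ on a subset of $J$ of measure $p_k|J|$, and the product form of the lengths makes the $g_k$ independent over $J$ with mean $c_k$.

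With this in hand, \eqref{a8} is immediate on each $J$: factoring the moment along the product structure gives $\int_J g_{n_1}\cdots g_{n_\nu}=|J|\prod_{j=1}^\nu\left(p_{n_j}B_{n_j}+(1-p_{n_j})A_{n_j}\right)=|J|\prod_{j=1}^\nu c_{n_j}=\int_J f_{n_1}\cdots f_{n_\nu}$, and summation over the intervals $J$ of the partition yields \eqref{a8}. For \eqref{a24} I would apply Jensen's inequality on each $J$: since the average of $g_k$ over $J$ equals $c_k$, the average of $\sum_k a_kg_k$ over $J$ equals $\sum_k a_kc_k$, which is precisely the constant value taken by $\sum_k a_kf_k$ on $J$. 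Convexity of $\Phi$ then gives $\int_J\Phi\left(\sum_k a_kg_k\right)\ge|J|\,\Phi\left(\sum_k a_kc_k\right)=\int_J\Phi\left(\sum_k a_kf_k\right)$, and summing over $J$ completes \eqref{a24}.

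The construction itself is the only point that needs care: one has to recognize that preserving \emph{all} the mixed moments in \eqref{a8} simultaneously is exactly the requirement that the $g_k$ be independent on each constancy interval, and that the $2^n$-fold product subdivision is what realizes this independence at the level of step functions. Once independence with the prescribed marginals is secured, \eqref{a8} follows by factoring moments and \eqref{a24} follows from one application of Jensen's inequality per interval, with no further estimates required.
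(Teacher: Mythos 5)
Your proof is correct. The core idea is the same as the paper's --- on each interval where the data are constant, replace the $f_k$ by two-valued functions with the same conditional mean and invoke Jensen's inequality --- but the execution differs. The paper replaces the functions \emph{one at a time}: on each constancy interval $\Delta=[\alpha,\beta)$ of the current system it splits $\Delta$ at a single point $c$ chosen so that $\int_\Delta g_1=\int_\Delta f_1$, notes that all mixed moments are preserved because the remaining functions are constant on $\Delta$, applies Jensen once, and then iterates on the refined system $g_1,f_2,\ldots,f_n$. You instead construct all $g_k$ \emph{simultaneously} by partitioning each constancy interval $J$ into $2^n$ pieces $I_\sigma$ whose lengths are the product probabilities $|J|\prod_{\sigma_k=1}p_k\prod_{\sigma_k=0}(1-p_k)$, which realizes the $g_k$ as independent $\{A_k,B_k\}$-valued variables on $J$ with means $c_k$. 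Your route buys a one-line verification of \eqref{a8} by factoring moments and needs only a single application of Jensen per interval rather than $n$ iterated ones; the paper's route is more economical in bookkeeping (only one split point per step, no exponential subdivision) and avoids having to argue that independence with prescribed marginals is achievable by intervals, though that is easy. One small caveat: your remark that preserving all mixed moments \emph{is exactly} the requirement of independence overstates matters --- independence is sufficient here but not necessary (the paper's iterative construction also works, and in fact produces the same joint distribution) --- but this does not affect the validity of your argument.
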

\begin{proof}
	Let $\Delta_j$, $j=1,2,\ldots,m$ be the intervals of constancy of functions $f_k$. Let $\Delta=[\alpha,\beta)$ be one of those intervals. Observe that the point
	\begin{equation}
	c=\frac{B_1\alpha-A_1\beta}{B_1-A_1}+\frac{1}{B_1-A_1}\cdot \int_\alpha^\beta f_1(t)dt
	\end{equation}
is in the closure of the interval $\Delta$. Then we define $g_1$ on $\Delta$ as
\begin{equation}
g_1(x)=B_1\cdot {\textbf 1}_{ [\alpha,c)}(x)+A_1\cdot{\textbf 1}_{[c,\beta)}(x),\quad x\in \Delta.
\end{equation}
Applying this to each $\Delta_j$, we will have $g_1$ defined on entire $[a,b)$ and one can check 
\begin{equation}\label{a3}
\int_{\Delta_j}g_1(t)dt=\int_{\Delta_j}f_1(t)dt,\quad j=1,2,\ldots,m.
\end{equation}
Since each $f_k$ is constant on the intervals $\Delta_j$, $j=1,2,\ldots,m$, from \eqref{a3} we conclude that
	\begin{equation}\label{a4}
\int_0^1g_{1} f_{n_2} \ldots f_{n_\nu}=\int_0^1f_{1}f_{n_2}\ldots f_{n_\nu},
\end{equation}
for any collection $1<n_2<\ldots<n_\nu$. We also claim that
\begin{equation}\label{a5}
\int_a^b\Phi\left(\sum_{k=1}^na_kf_k\right)\le \int_a^b\Phi\left(a_1g_1+\sum_{k=2}^na_kf_k\right).
\end{equation}
Fix an interval $\Delta_j$ and suppose that $f_k(t)=c_k$ on $\Delta_j$. Applying \eqref{a3} and the Jessen inequality, we get
\begin{align}
\int_{{\Delta_j}}\Phi\left(\sum_{k=1}^na_kf_k(t)\right)dt&=\Phi\left(\sum_{k=1}^na_kc_k\right)|{\Delta_j}|\\
&=\Phi\left(\frac{1}{|{\Delta_j}|}\int_{{\Delta_j}}a_1g_1(t)dt+\sum_{k=2}^na_kc_k\right) |{\Delta_j}|\\
&=\Phi\left(\frac{1}{|{\Delta_j}|}\int_{{\Delta_j}}\left(a_1g_1(t)+\sum_{k=2}^na_kc_k\right)dt\right)|{\Delta_j}|\\
&\le \int_{{\Delta_j}}\Phi\left(a_1g_1+\sum_{k=2}^na_kf_k\right)dt,
\end{align}
then the summation over $j$ implies \eqref{a5}. Applying the same procedure to the new system 
$g_1,f_2,\ldots, f_n$ we can similarly replace $f_2$ by $g_2$. Continuing this procedure we will replace all functions $f_k$ to $g_k$ ensuring the conditions of lemma.
\end{proof}
\begin{lemma}\label{L3}
If $g_k$, $k=1,2,\ldots,n$, is a ${\mathfrak M}_l$-multiplicative system of nonzero random variables such that each $g_k$ takes two values, then $g_k$ are ${\mathfrak M}_l$-independent.
\end{lemma}
\begin{proof}
	Suppose that  $g_k$ takes values $A_k$ and $B_k$. Since ${\textbf E}(g_k)=0$, we can say $A_k<0<B_k$ and 
	\begin{equation}\label{x7}
	|\{g_k(x)=A_k\}|=\frac{B_k}{B_k-A_k},\quad |\{g_k(x)=B_k\}|=\frac{A_k}{A_k-B_k}.
	\end{equation}
	Let $C_k$ be a sequence that randomly equal either $A_k$ or $B_k$. We need to prove
	\begin{equation*}
		|\{g_j=C_j:\, j\in M\}|=\prod_{j\in M}	|\{g_j=C_j\}|
	\end{equation*}
for any $M\in {\mathfrak M}_l$. Without loss of generality we can suppose that $C_j=A_j$ for all $j\in M$. Then, using the multiplicative condition and \eqref{x7}, we obtain	
	\begin{align*}
	|\{g_j(x)&=A_k:\, j\in M\}|={\textbf E}\left[\prod_{j\in M}\left(\frac{B_j}{B_j-A_j}-\frac{g_j}{B_j-A_j}\right)\right]\\
	&=\prod_{j\in M}\frac{B_j}{B_j-A_j}\\
	&\qquad +\sum_{D\subsetneq M}\quad (-1)^{{\rm card}(D)}\bigg[\prod_{j\in M\setminus D}\frac{B_j}{B_j-A_j}\\
	&\qquad\times \prod_{j\in D}\frac{1}{B_{j}-A_{j}}\cdot {\textbf E}\left(\prod_{j\in D}g_j\right)\bigg]\\
	&=\prod_{j\in M}|\{g_{j}(x)=A_{j}\}|,
	\end{align*}
completing the proof of lemma.
\end{proof}

\section{Proof of Theorems}\label{S3}
\begin{proof}[Proof of Theorem \ref{T1}]
	Without loss of generality we can suppose that $\phi_k$ are defined on $[0,1)$. Applying Lemma \ref{L0} for ${\mathfrak M}={\mathfrak M}_l$, we can find extensions of $\phi_k$ up to $[0,1+\mu)$, satisfying  \eqref{a26} and \eqref{a27} (with ${\mathfrak M}={\mathfrak M}_l$). By \eqref{a27} $\phi_k$ turns to be ${\mathfrak M}_l$-multiplicative on $[0,1+\mu)$. Applying Lemma \ref{L1}, we find an ${\mathfrak M}_l$-multiplicative system of step functions $f_k$ on $[0,1+\mu)$ satisfying 
	\begin{equation}\label{a32}
	|\{x\in [0,1+\mu):\,|f_k(x)-\phi_k(x)|>\delta\}|<\delta.
	\end{equation}
	Finally, we apply Lemma \ref{L2} and get $A_k,B_k$-valued step functions $g_k$ defined on $[0,1+\mu)$ and satisfying \eqref{a8} and \eqref{a24} ($a=0,b=1+\mu$). Since $\{f_k\}$ is ${\mathfrak M}_l$-multiplicative, in view of \eqref{a8} so we will have for $\{g_k\}$. By Lemma \ref{L3} functions $\xi_k(x)=g_k((1+\mu)x)$ turn to be an $A_k,B_k$-valued ${\mathfrak M}_l$-independent random variables. Observe that
	\begin{equation}
	\int_0^1\Phi\left(\sum_{k=1}^na_k\phi_k\right)\le \varepsilon(\delta, \{a_k\}) +\int_0^1\Phi\left(\sum_{k=1}^na_kf_k\right),
	\end{equation}
where by \eqref{a32} we have
\begin{equation}\label{x9}
	\varepsilon =\varepsilon(\delta, \{a_k\})\to 0\text{ as } \delta\to 0, 
\end{equation}
and it holds for any choice of coefficients $\{a_k:\, k=1,2,\ldots\}$. Thus, from \eqref{a24} we obtain
	\begin{align}
	\int_0^1\Phi\left(\sum_{k=1}^na_k\phi_k\right)&\le \varepsilon +\int_0^1\Phi\left(\sum_{k=1}^na_kf_k\right)\le \varepsilon +\int_0^{1+\mu}\Phi\left(\sum_{k=1}^na_kf_k\right)\\
	&\le \varepsilon +\int_0^{1+\mu}\Phi\left(\sum_{k=1}^na_kg_k\right)\\
	&=\varepsilon +(1+\mu)\int_0^{1}\Phi\left(\sum_{k=1}^na_kg_k((1+\mu)x)\right)dx\\
	&= \varepsilon +	(1+\mu)\textbf{E}\left[\Phi\left(\sum_{k=1}^na_k\xi_k\right)\right].
	\end{align}
	Therefore, taking into account \eqref{x9}, one can easily get \eqref{a12}. Indeed, the system $\{\xi_k\}$ depends on $\delta$ and is independent of $\{a_k\}$. So let $\{\xi_k^{(m)}\}$ be the system corresponding to $\delta =1/m$. We can suppose that there is a partition $0=x_0^{(m)}\le x_1^{(m)}\le, \ldots, x_{2^n}^{(m)}=1$ such that each function $\xi_k^{(m)}$ is equal to a constant (either $A_k$ or $B_k$) on every interval $[x_{j-1}^{(m)},x_j^{(m)})$, $1\le j\le 2^n$ and this constant is independent of $m$. Then we find a sequence $m_k$ such that each sequence $x_j^{(m_k)}$, $k=1,2,\ldots $ is convergence. One can check that this  generates a limit  sequence of ${\mathfrak M}_l$-multiplicative $\{A_k,B_k\}$-valued random variables $\xi_k$, $k=1,2,\ldots,n$, satisfying \eqref{a12}.
\end{proof}
\begin{proof}[Proof of Theorem \ref{T2}]
Applying Theorem \ref{T1} for $\Phi(t)=\exp(\gamma t)$, $\gamma>0$, and $l=n$, we find $\{A_k,B_k\}$-valued independent system $\{\xi_k\}$ satisfying \eqref{a12}. Thus, we get
\begin{align*}
\left|\left\{\sum_{k=1}^n\phi_k>\lambda\right\}\right|&\le e^{-\gamma \lambda}	{\textbf E}\left[\exp\left(\gamma\sum_{k=1}^n\phi_k\right)\right]\\
&\le (1+\mu)e^{-\gamma \lambda} {\textbf E}\left[\exp\left(\gamma\sum_{k=1}^n\xi_k\right)\right]\\
&=(1+\mu)e^{-\gamma \lambda}\prod_{k=1}^n	{\textbf E}\left[\exp\left(\gamma \xi_k\right)\right].
\end{align*}
Then applying Hoeffding's \cite{Hoe} inequality we get
\begin{equation*}
	{\textbf E}\left[\exp\left(\gamma  \xi_k\right)\right]\le \exp\left(\frac{\gamma^2(B_k-A_k)^2}{8}\right)
\end{equation*}
and finally,
\begin{equation*}
\left|\left\{\sum_{k=1}^n\phi_k>\lambda\right\}\right|\le (1+\mu) e^{-\gamma \lambda}\exp\left(\frac{\gamma^2\sum_{k=1}^n(B_k-A_k)^2}{8}\right).
\end{equation*}
Choosing $\gamma=\frac{4\lambda}{\sum_{k=1}^n(B_k-A_k)^2}$ we get the bound
\begin{equation*}
\left|\left\{\sum_{k=1}^n\phi_k>\lambda\right\}\right|\le (1+\mu)\exp\left(-\frac{2\lambda^2}{\sum_{k=1}^n(B_k-A_k)^2}\right),
\end{equation*}
which completes the proof of theorem.
\end{proof}
\section{Lacunary subsystems}\label{S4}
In this section we provide applications of the main results in lacunary systems.
\begin{definition}
	An infinite sequence of random variables $\phi=\{\phi_k:\, k=1,2,\ldots \}$ satisfying \eqref{a28} with $A_k=-1,B_k=1$ is said to be quasi-multiplicative if the multiplicative error \eqref{a30} is finite, that is $\mu=\mu(\phi,{\mathfrak M}_\infty)<\infty$, where ${\mathfrak M}_\infty$ denotes the family of all finite nonempty subsets of positive integers ${\mathbb Z}_+=\{1,2,\ldots\}$.
\end{definition}

\begin{definition}
	An infinite sequence of random variables $\phi=\{\phi_k:\, k=1,2,\ldots \}$ is said to be sub-Gaussian if there are constants $c_1,c_2>0$ such that the inequality 
	\begin{equation}
	{\textbf E}\left[\exp \left(c_1\frac{|\sum_{k=1}^na_k\phi_k|^2}{\sum_{k=1}^na_k^2}\right)\right]\le c_2
	\end{equation}
	holds for any coefficients $a_k$, $k=1,2,\ldots, n$.
\end{definition}
It is well-known that an equivalent condition for a system $\phi=\{\phi_k:\, k=1,2,\ldots \}$ to be sub-Gaussian is the bound $K(\nu)\lesssim \sqrt \nu$ in the Khintchin inequality \eqref{a14}. The following corollary, in addition to classical results stated in the introduction, provides new examples of sub-Gaussian sequences of random variables. It immediately follows from \eqref{a12} and the Khintchin inequality for the Rademacher independent random variables. Namely,
\begin{corollary}\label{C4}
Any quasi-multiplicative sequence of random variables is sub-Gaussian. 
\end{corollary}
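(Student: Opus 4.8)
The plan is to apply the master inequality \eqref{a12} to the convex function $\Phi(t)=\exp\left(c_1t^2/S\right)$, where $S=\sum_{k=1}^na_k^2$, thereby reducing the desired exponential-square bound for $\phi$ to the corresponding bound for Rademacher independent random variables. Fix coefficients $a_1,\ldots,a_n$; we may assume $S>0$, since otherwise the claim is trivial. For every $c_1>0$ the map $t\mapsto \exp\left(c_1t^2/S\right)$ is convex and sends ${\mathbb R}$ into ${\mathbb R}^+$, so Theorem \ref{T1} is applicable to it.

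First I would record the elementary monotonicity of the multiplicative error. Since every nonempty subset of ${\mathbb Z}_n$ is in particular a finite nonempty subset of ${\mathbb Z}_+$, the sum defining $\mu(\phi,{\mathfrak M}_n)$ is a partial sum of the nonnegative series defining $\mu(\phi,{\mathfrak M}_\infty)$; hence $\mu(\phi,{\mathfrak M}_n)\le \mu(\phi,{\mathfrak M}_\infty)=\mu<\infty$ for every $n$, the last quantity being finite by quasi-multiplicativity. Applying Theorem \ref{T1} with $l=n$, $A_k=-1$ and $B_k=1$, we obtain $\{-1,1\}$-valued mean zero ${\mathfrak M}_n$-independent — hence jointly independent, classical Rademacher — random variables $\xi_k$ for which
\begin{equation*}
{\textbf E}\left[\exp\left(c_1Y^2/S\right)\right]\le (1+\mu)\,{\textbf E}\left[\exp\left(c_1Z^2/S\right)\right],\qquad Y=\sum_{k=1}^na_k\phi_k,\ \ Z=\sum_{k=1}^na_k\xi_k.
\end{equation*}

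It then remains to bound the right-hand side by a constant independent of $n$ and of the coefficients. For the Rademacher sum $Z$, Hoeffding's estimate (used already in the proof of Theorem \ref{T2}, with $(B_k-A_k)^2/8=1/2$) together with independence gives ${\textbf E}[\exp(\gamma Z)]\le \exp(\gamma^2S/2)$ for all $\gamma$, whence the symmetric tail bound $|\{|Z|>t\}|\le 2\exp\left(-t^2/(2S)\right)$. Feeding this tail into the layer-cake identity for ${\textbf E}[\exp(c_1Z^2/S)]$ shows that the latter is finite and bounded by a universal constant $c_2'$ as soon as $c_1<1/2$; alternatively one may expand the exponential and insert the even-moment Khintchin bounds ${\textbf E}[Z^{2m}]\le K(2m)^{2m}S^m$ with $K(2m)\lesssim\sqrt m$ coming from \eqref{a17}, the resulting series converging for $c_1$ small. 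Fixing such a $c_1$ and setting $c_2=(1+\mu)c_2'$ yields
\begin{equation*}
{\textbf E}\left[\exp\left(c_1\frac{\left|\sum_{k=1}^na_k\phi_k\right|^2}{\sum_{k=1}^na_k^2}\right)\right]\le c_2,
\end{equation*}
which is exactly the sub-Gaussian estimate.

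I expect the only real point requiring care to be the uniformity of the constants. Both $c_1$ and $c_2'$ are absolute, coming from the scale-invariant Rademacher bound, and they are unaffected by the fact that the system $\{\xi_k\}$ produced by Theorem \ref{T1} may itself depend on $\Phi$ and on the $a_k$, since the Rademacher estimate holds for \emph{any} independent sign system with the given coefficients. The single coefficient-dependent quantity, the normalization $S$, has been built into $\Phi$ so that it cancels in the final bound, while the prefactor is controlled once and for all by $1+\mu$, finite precisely because the sequence is quasi-multiplicative. Thus no step introduces a dependence on $n$ or on the $a_k$, and the inequality holds with the same $c_1,c_2$ for all finite linear combinations.
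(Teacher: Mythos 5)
Your proof is correct and follows the paper's intended route: the paper disposes of this corollary in one line, saying it ``immediately follows from \eqref{a12} and the Khintchin inequality for the Rademacher independent random variables,'' i.e.\ precisely the reduction to the Rademacher case via Theorem \ref{T1} with the multiplicative error uniformly controlled by $\mu(\phi,{\mathfrak M}_n)\le\mu(\phi,{\mathfrak M}_\infty)<\infty$, which is the content of your argument. The only cosmetic difference is that you apply \eqref{a12} directly to the convex function $\exp(c_1t^2/S)$ and finish with a tail/layer-cake bound for the Rademacher sum, whereas the paper routes through the moment bounds $K(\nu)\lesssim\sqrt\nu$ and the stated equivalence with sub-Gaussianity; both are valid instantiations of the same reduction.
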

We consider lacunary trigonometric system 
\begin{equation}\label{b10}
t_k(x)=\sin(2\pi \tau(k)x),\quad x\in [0,1),\quad k=1,2,\ldots,
\end{equation}
where
\begin{equation}\label{b9}
\tau(1)\ge 1,\quad \tau(k+1)>\lambda \tau(k),\quad k=1,2,\ldots,
\end{equation}
for some constant $\lambda>1$. In the case of integer $\tau(k)$ this sequence is known to be either multiplicative (if $\lambda\ge 3$) or finite union of multiplicative systems. It was proved by Zygmund (\cite{Zyg}, chap. 5) that in that case the system $t_k(x)$ is sub-Gaussian. Using Corollary \ref{C4} and the following result we prove that $t_k(x)$ is sub-Gaussian in the general case.
\begin{proposition}\label{P4}
If $\lambda>2$, then the sequence \eqref{b10} is quasi-multiplicative (with parameters $A_k=-1,B_k=1$ in \eqref{a30}). Moreover, the multiplicative error satisfies
\begin{equation*}
\mu=\mu(\phi,{\mathfrak M}_\infty)\le \frac{\lambda(\lambda -1)}{\pi (\lambda-2)^2}.
\end{equation*}
\end{proposition}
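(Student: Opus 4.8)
The plan is to compute the multiplicative error directly from its definition. Since $A_k=-1$ and $B_k=1$ we have $C_k=\min\{-A_k,B_k\}=1$, so that $\mu$ is simply the sum $\sum_{\{n_1<\cdots<n_\nu\}}\bigl|\int_0^1\prod_{j=1}^\nu\sin(2\pi\tau(n_j)x)\,dx\bigr|$ taken over all finite nonempty subsets of $\mathbb{Z}_+$. I would first bound a single term $\bigl|\mathbf{E}[t_{n_1}\cdots t_{n_\nu}]\bigr|$ and then carry out the summation.

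For the single term, I would expand each sine via $\sin\theta=(e^{i\theta}-e^{-i\theta})/(2i)$, so that the product becomes $(2i)^{-\nu}\sum_{\epsilon\in\{-1,1\}^\nu}\bigl(\prod_j\epsilon_j\bigr)e^{2\pi i S_\epsilon x}$, where $S_\epsilon=\sum_{j}\epsilon_j\tau(n_j)$. Integrating term by term over $[0,1)$ and using $\bigl|\int_0^1 e^{2\pi i S x}\,dx\bigr|=\bigl|(e^{2\pi iS}-1)/(2\pi iS)\bigr|\le 1/(\pi|S|)$ for $S\ne0$, I obtain $\bigl|\mathbf{E}[t_{n_1}\cdots t_{n_\nu}]\bigr|\le 2^{-\nu}\sum_{\epsilon}1/(\pi|S_\epsilon|)$. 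Here it is exactly the possible non-integrality of the frequencies $\tau(n_j)$ that keeps these integrals from vanishing and thereby produces the error; for integer lacunary frequencies one would instead recover genuine multiplicativity.

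The crux of the argument—and the step I expect to be the main obstacle—is the lower bound on the frequencies $|S_\epsilon|$. Writing $m_j=\tau(n_j)$, the lacunarity \eqref{b9} forces $m_{j+1}>\lambda m_j$ along any increasing subsequence, so that $\sum_{j=1}^{\nu-1}m_j<m_{\nu-1}\cdot\lambda/(\lambda-1)<m_\nu/(\lambda-1)$. Isolating the largest term then gives $|S_\epsilon|\ge m_\nu-\sum_{j<\nu}m_j>m_\nu(\lambda-2)/(\lambda-1)$, which is \emph{positive precisely because} $\lambda>2$; this uniform bound holds for every $\epsilon$, and trivially in the case $\nu=1$ as well since $(\lambda-2)/(\lambda-1)\le 1$. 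Substituting it into the previous estimate and cancelling the factor $2^\nu$ yields the clean per-term bound $\bigl|\mathbf{E}[t_{n_1}\cdots t_{n_\nu}]\bigr|\le \dfrac{\lambda-1}{\pi(\lambda-2)\,\tau(n_\nu)}$, depending only on the largest index $n_\nu$.

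Finally, I would sum over all subsets by grouping them according to their maximal element $N$: there are exactly $2^{N-1}$ finite subsets of $\mathbb{Z}_+$ with $\max=N$, and for each one the bound above applies with $\tau(n_\nu)=\tau(N)\ge\lambda^{N-1}\tau(1)\ge\lambda^{N-1}$. Hence $\mu\le\frac{\lambda-1}{\pi(\lambda-2)}\sum_{N\ge1}(2/\lambda)^{N-1}$, a convergent geometric series (again using $\lambda>2$) summing to $\lambda/(\lambda-2)$, which gives $\mu\le\lambda(\lambda-1)/(\pi(\lambda-2)^2)$, as claimed. The finiteness of this bound establishes that the system is quasi-multiplicative.
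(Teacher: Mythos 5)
Your proposal is correct and follows essentially the same route as the paper: expanding the product of sines into $2^{\nu}$ oscillating terms with frequencies $\tau(n_\nu)\pm\cdots\pm\tau(n_1)$ (the paper uses product-to-sum formulas, you use complex exponentials, which is the same decomposition), bounding each frequency below by $\tau(n_\nu)(\lambda-2)/(\lambda-1)$ via the lacunarity condition, and then summing over subsets grouped by their maximal element with the same geometric series. The only difference is that you spell out the frequency lower bound that the paper dismisses as ``a simple calculation.''
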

\begin{proof}
Let 
\begin{equation}\label{a35}
\{n_1,n_2,\ldots,n_\nu\}\in {\mathfrak M}_\infty
\end{equation}
be arbitrary collection of indexes with the head $n_\nu$, that is $n_1<n_2<\ldots<n_\nu$. 
Using the product to sum formulas for trigonometric functions, we can write the integral
	\begin{align}
	\int_0^1\prod_{j=1}^\nu \sin(2\pi \tau(n_j)x)dx,\label{b2}
		\end{align}
as an arithmetic mean of $2^{\nu-1}$ integrals of the forms
\begin{align}
\int_0^1\sin \left[(2\pi (\tau(n_\nu)\pm \tau(n_{\nu-1})\pm\ldots\pm \tau(n_{1}))x\right]dx,\label{b11}\\
\int_0^1\cos \left[(2\pi (\tau(n_\nu)\pm \tau(n_{\nu-1})\pm\ldots\pm \tau(n_{1}))x\right]dx.\label{b12}
\end{align}
A simple calculation shows that 
\begin{equation}\label{b1}
\tau(n_\nu)\pm \tau(n_{\nu-1})\pm\ldots\pm \tau(n_{1})\in \left(\frac{(\lambda-2)\tau(n_\nu)}{\lambda-1},\frac{\lambda\tau(n_\nu)}{\lambda-1}\right)
\end{equation}	
for all choices of $\pm$, so the absolute value of each integral in \eqref{b11} and \eqref{b12} can be estimated by $\frac{\lambda -1}{\pi (\lambda-2)\tau(n_\nu)}$. Thus the same bound we will have for the integral \eqref{b2}. Namely,
	\begin{equation*}
	\left|\int_0^1\prod_{j=1}^\nu \sin(2\pi \tau(n_j)x)\right|\le \frac{\lambda -1}{\pi (\lambda-2)\tau(n_\nu)}.
	\end{equation*}
On the other hand the number of collections \eqref{a35} with a fixed head  $n=n_\nu$ is equal to $2^{n-1}$ and we have $\tau(n)\ge \lambda^{n-1}$. Thus for the multiplicative error we obtain
	\begin{equation*}
	\mu\le  \frac{\lambda -1}{\pi (\lambda-2)}\sum_{n=1}^\infty\frac{2^{n-1}}{\tau(n)}\le \frac{\lambda(\lambda -1)}{\pi (\lambda-2)^2}.
	\end{equation*}

\end{proof}
The following corollary is an extension of the above mentioned theorem of Zygmund, where an integer sequence $\tau(k)$ is considered (see \cite{Zyg}, chap. 5, Theorem 8.20). Note that if $\tau(k)$ are integers, then the system \eqref{b10} becomes orthogonal and that is essential in the proof of the Zygmund theorem. Our proof of Corollary \ref{C5} essentially uses the main result via Corollary \ref{T3} and Corollary \ref{C4}. Hence,
	\begin{corollary}\label{C5}
	If real numbers  $\tau(k)$ satisfy \eqref{b9} with a constant $\lambda>1$, then lacunary system \eqref{b10} is sub-Gaussian.
\end{corollary}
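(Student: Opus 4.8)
The plan is to reduce the general lacunary case $\lambda>1$ to the case $\lambda>2$ already settled in Proposition \ref{P4}, by breaking the sequence into finitely many much better separated subsequences. The starting point is the observation that iterating the hypothesis $\tau(k+1)>\lambda\tau(k)$ from \eqref{b9} gives $\tau(k+m)>\lambda^m\tau(k)$ for every $m$. Accordingly I would first fix an integer $m$ large enough that $\lambda^m>2$, and decompose the index set $\mathbb{Z}_+$ into the $m$ residue classes modulo $m$. For each $j\in\{1,\ldots,m\}$ the subsequence $\{t_{j+im}\}_{i\ge 0}$ is again a lacunary sine system of the form \eqref{b10}, with relabelled frequencies $\tau'(i)=\tau(j+im)$ satisfying \eqref{b9} with $\lambda^m$ in place of $\lambda$, since $\tau'(i+1)>\lambda^m\tau'(i)$.

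Next I would apply Proposition \ref{P4} with $\lambda^m>2$ to each of these $m$ subsequences. This shows each subsequence is quasi-multiplicative with multiplicative error bounded by $\frac{\lambda^m(\lambda^m-1)}{\pi(\lambda^m-2)^2}$, a bound that is the same for all $j$. Corollary \ref{C4} then makes each subsequence sub-Gaussian; invoking the equivalence quoted before Corollary \ref{C4} between sub-Gaussianity and the estimate $K(\nu)\lesssim\sqrt\nu$ in \eqref{a14}, I obtain a constant $c$ independent of $j$ and $\nu$ with
\begin{equation*}
\Big\|\sum_i a_{j+im}t_{j+im}\Big\|_\nu\le c\sqrt\nu\Big(\sum_i a_{j+im}^2\Big)^{1/2},\qquad j=1,\ldots,m.
\end{equation*}

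To recombine the pieces I would write $\sum_k a_k t_k=\sum_{j=1}^m\big(\sum_i a_{j+im}t_{j+im}\big)$, apply Minkowski's inequality in $L^\nu$, insert the $m$ uniform bounds above, and close with Cauchy--Schwarz in the outer sum over $j$:
\begin{equation*}
\Big\|\sum_k a_k t_k\Big\|_\nu\le\sum_{j=1}^m c\sqrt\nu\Big(\sum_i a_{j+im}^2\Big)^{1/2}\le c\sqrt{m\nu}\Big(\sum_k a_k^2\Big)^{1/2}.
\end{equation*}
This is precisely a Khintchin inequality \eqref{a14} for the full system with constant $K(\nu)\le c\sqrt{m}\cdot\sqrt\nu\lesssim\sqrt\nu$, so by the same equivalence the system \eqref{b10} is sub-Gaussian, as claimed.

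I expect the only real obstacle to be the recombination step. Showing that a finite union of sub-Gaussian systems is again sub-Gaussian is not transparent if one argues directly with exponential moments, because products of such moments over the $m$ blocks do not combine cleanly. Passing through the equivalent Khintchin formulation $K(\nu)\lesssim\sqrt\nu$ is what makes the step routine: there Minkowski and Cauchy--Schwarz absorb the splitting at the cost of only the harmless factor $\sqrt m$, which is a fixed constant once $m$ is chosen from $\lambda$. Everything else is a direct application of Proposition \ref{P4} and Corollary \ref{C4}.
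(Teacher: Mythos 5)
Your proof is correct and follows essentially the same route as the paper: split the sequence into finitely many subsequences of lacunarity order large enough for Proposition \ref{P4} to apply, conclude each piece is sub-Gaussian via Corollary \ref{C4}, and recombine. The only difference is that you spell out, via the Khintchin formulation $K(\nu)\lesssim\sqrt\nu$ with Minkowski and Cauchy--Schwarz, the step ``a finite union of sub-Gaussian sequences is sub-Gaussian,'' which the paper simply asserts.
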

\begin{proof}
	If the lacunarity order $\lambda\ge 3$, then by Proposition \ref{P4} our system $\{t_k(x)\}$ is a quasi-multiplicative and so sub-Gaussian by Corollary \ref{C4}. If $1<\lambda<3$, then \eqref{b10} can be split into $\lceil\log_\lambda 3\rceil$ number of systems of lacunarity order greater that $3$. It remains just notice that a finite union of sub-Gaussian sequences is sub-Gaussian. 
\end{proof}
\begin{definition}
	An infinite  sequence of random variables $\phi=\{\phi_k:\, k=1,2,\ldots \}$  is said to be unconditional convergence system if under the condition 
	$\sum_{n=1}^\infty a_n^2<\infty$ the series 
	\begin{equation}\label{b6}
	\sum_{k=1}^\infty a_k\phi_k(x)
	\end{equation}
	converges a.e. after any rearrangements of the terms.
\end{definition}
\begin{corollary}
	If an infinite sequence of real numbers  $\tau(k)$ satisfies \eqref{b9} with $\lambda>1$, then the lacunary system \eqref{b10} is an unconditional convergence system.
\end{corollary}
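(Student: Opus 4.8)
The plan is to obtain unconditional convergence as a formal consequence of the sub-Gaussianity established in Corollary~\ref{C5}, of Stechkin's convergence theorem, and of the elementary fact that the Khintchin inequality \eqref{a14} is unaffected by a rearrangement of the terms. First I recall that, as remarked in the introduction, sub-Gaussianity of a system is equivalent to the bound $K(\nu)\lesssim\sqrt\nu$ in \eqref{a14}; since Corollary~\ref{C5} guarantees that the system \eqref{b10} is sub-Gaussian for every $\lambda>1$, the inequality \eqref{a14} holds for $\{t_k\}$ with a constant $K(\nu)\lesssim\sqrt\nu$ for every finite choice of coefficients, in particular over every finite subset of indices (take the remaining coefficients equal to zero).

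The key observation is that \eqref{a14} constrains only finite linear combinations and is therefore insensitive to reindexing. Indeed, for any permutation $\sigma$ of ${\mathbb Z}_+$ and any $n$, the sum $\sum_{k=1}^n a_k t_{\sigma(k)}$ equals $\sum_j b_j t_j$, where $b_j=a_{\sigma^{-1}(j)}$ for $j\in\sigma(\{1,\dots,n\})$ and $b_j=0$ otherwise; since $\sum_j b_j^2=\sum_{k=1}^n a_k^2$, the rearranged system $\{t_{\sigma(k)}\}$ again satisfies \eqref{a14} with the same constant $K(\nu)$. Thus every rearrangement of \eqref{b10} obeys the Khintchin inequality.

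It then remains to apply Stechkin's theorem (the result invoked right after Theorem~\ref{KG}; see \cite{KaSa}, chap.~9.4), according to which any system satisfying \eqref{a14} is a convergence system. Applying it to the rearranged system $\{t_{\sigma(k)}\}$ with the square-summable coefficients $\{a_{\sigma(k)}\}$, for which $\sum_k a_{\sigma(k)}^2=\sum_k a_k^2<\infty$, yields a.e. convergence of $\sum_k a_{\sigma(k)}t_{\sigma(k)}$. As $\sigma$ is an arbitrary permutation, this is precisely the assertion that \eqref{b10} is an unconditional convergence system. The only point that requires care is verifying that the hypothesis of Stechkin's theorem survives rearrangement; this is exactly what the finite-sum form of \eqref{a14} supplies, so once Corollary~\ref{C5} is in hand no further analytic estimate is needed. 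Alternatively, for $\lambda>2$ one may argue directly from Proposition~\ref{P4}, observing that the multiplicative error $\mu(\phi,{\mathfrak M}_\infty)$ depends only on the underlying unordered set of functions and is hence itself rearrangement-invariant, so that each rearranged system is again quasi-multiplicative and the same conclusion follows through Corollary~\ref{C4}.
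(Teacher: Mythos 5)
Your proposal is correct and follows essentially the same route as the paper: Corollary~\ref{C5} gives sub-Gaussianity, hence the Khintchin inequality \eqref{a14}, and Stechkin's theorem from \cite{KaSa} (chap.~9.4) then yields unconditional convergence. Your explicit verification that \eqref{a14} is rearrangement-invariant is a harmless (and mildly clarifying) unpacking of what the paper leaves implicit in citing Stechkin's result directly for unconditional convergence.
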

\begin{proof}
	According to  Corollary \ref{C5}, $\{t_k(x)\}$ is sub-Gaussian so satisfies Khintchin's inequality for any $p>2$. Then, by Stechkin's  result of \cite{KaSa} (chap 9.4), we conclude that  $\{t_k(x)\}$ is an unconditional convergence system.
\end{proof}
\begin{theorem}\label{T5}
	If $\phi_k$ is an orthogonal system of random variables and $\|\phi_k\|_\infty\le M$, then for any $\lambda>1$ one can find a subsequence of integers $n_k$ such that $n_{k}\le \lambda^k$ for $k\ge k(\lambda)$ and $\{\phi_{n_k}\}$ is sub-Gaussian.
\end{theorem}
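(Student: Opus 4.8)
The plan is to extract the subsequence so that, after normalising $\psi_k:=\phi_k/M$ (so $\|\psi_k\|_\infty\le 1$), the selected system $\{\psi_{n_k}\}$ is \emph{almost} ${\mathfrak M}_p$-multiplicative for every even $p$, in the quantitative sense that its multiplicative error over sets of size $\le p$ grows only exponentially,
\[
\mu_p:=\mu\bigl(\{\psi_{n_k}\},{\mathfrak M}_p\bigr)\le C^{p}\qquad(p\ \text{even}),
\]
for a constant $C=C(\lambda,M)$. This is exactly the slack one can afford: applying Theorem \ref{T1} with $\Phi(t)=|t|^{p}$ and $l=p$ to finite truncations of $\{\psi_{n_k}\}$ produces $\{-1,1\}$-valued, mean-zero, ${\mathfrak M}_p$-independent (hence ${\mathfrak M}_p$-multiplicative) variables $\xi_k$, to which the optimal even-moment inequality \eqref{a20} applies, giving $\textbf{E}\bigl[\bigl|\sum a_k\psi_{n_k}\bigr|^{p}\bigr]\le (1+\mu_p)\,(p-1)!!\,(\sum a_k^2)^{p/2}$. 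Hence $K(p)\le (1+\mu_p)^{1/p}\bigl((p-1)!!\bigr)^{1/p}\lesssim (1+C^{p})^{1/p}\sqrt p\lesssim\sqrt p$, and by the stated equivalence this is precisely sub-Gaussianity of $\{\psi_{n_k}\}$, and therefore of $\{\phi_{n_k}\}=M\psi_{n_k}$. The point to stress is that I do \emph{not} aim for $\mu(\{\psi_{n_k}\},{\mathfrak M}_\infty)<\infty$ (full quasi-multiplicativity as in Corollary \ref{C4}): for $\lambda<2$ that is impossible, since at the $m$-th step there are $2^{m-1}$ products to annihilate but only $\sim\lambda^{m}$ available indices. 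The exponential tolerance $C^{p}$ is what lets one push $\lambda$ down to any value $>1$.

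The selection itself I would run by Bessel's inequality. For the orthonormal system $\{\phi_k\}$ (we may assume $\|\phi_k\|_2=1$) and a fixed bounded $h$ one has $\sum_k|\langle\phi_k,h\rangle|^2\le\|h\|_2^2$, so $\#\{k:|\langle\phi_k,h\rangle|\ge\delta\}\le\|h\|_2^2\delta^{-2}$. Fix a small $\epsilon>0$ and a $\theta\in(0,1)$ close to $1$ so that $2^{H(\epsilon)}\theta^{-2}<\lambda$, where $H$ is the binary entropy; this is possible for \emph{every} $\lambda>1$ since the left-hand side tends to $1$ as $\epsilon\to0^+$, $\theta\to1^-$. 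Writing $P_S=\prod_{i\in S}\psi_{n_i}$ (with $\|P_S\|_\infty\le1$ and $P_\varnothing=1$), I define $n_m$ inductively as the least integer $>n_{m-1}$ with $|\langle\phi_{n_m},P_S\rangle|<\theta^{m}$ for all $S\subseteq\{1,\dots,m-1\}$ of size $|S|\le\epsilon m$. The number of such constraints is $N_m=\binom{m-1}{\le\lfloor\epsilon m\rfloor}\le m\,2^{H(\epsilon)m}$, and by the Bessel count the set of forbidden indices has size $\le N_m\theta^{-2m}$; hence $n_m\le n_{m-1}+N_m\theta^{-2m}+1$, which telescopes to $n_m\le C'\beta^{m}$ for any fixed $\beta$ with $2^{H(\epsilon)}\theta^{-2}<\beta<\lambda$. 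Since $\beta<\lambda$, this yields $n_m\le\lambda^{m}$ for all $m\ge k(\lambda)$, and the construction automatically keeps the $n_m$ increasing.

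It then remains to verify $\mu_p\le C^{p}$. I would group the subsets $T$ of the subsequence by their largest index $n_m$: such a $T$ equals $\{n_m\}\cup\{n_i:i\in S\}$ with $S\subseteq\{1,\dots,m-1\}$, and $\textbf{E}\bigl[\prod_{T}\psi\bigr]=\tfrac1M\langle\phi_{n_m},P_S\rangle$ (the size-two terms vanish outright by orthogonality). For the \emph{controlled} sets, those with $|S|\le\epsilon m$, the construction gives $|\textbf{E}[\cdots]|<\theta^{m}/M$, and summing $\sum_m\binom{m-1}{\nu-1}\theta^{m}=\rho^{\nu}$ with $\rho=\theta/(1-\theta)$ over $\nu\le p$ contributes at most $\tfrac1M\sum_{\nu=1}^{p}\rho^{\nu}\le C_1^{p}$. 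The \emph{uncontrolled} sets, those with $|S|>\epsilon m$ and $|T|\le p$, force $m<p/\epsilon$; there are finitely many such steps and each term is $\le1$, so their total is $\le\sum_{m<p/\epsilon}2^{m-1}\le 2^{p/\epsilon}=C_2^{p}$. Together these give $\mu_p\le C^{p}$, closing the loop with the first paragraph.

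The main obstacle is the one already flagged: the number of products $\prod_{i\in S}\psi_{n_i}$ one might wish to annihilate grows like $2^{m-1}$, which outstrips the $\sim\lambda^{m}$ indices available when $\lambda<2$, so no choice of $n_m$ can kill them all. The resolution has two ingredients that must be balanced through the single inequality $2^{H(\epsilon)}\theta^{-2}<\lambda$: controlling only the products of size $\le\epsilon m$ (an entropy-small family, feasible for every $\lambda>1$), and exploiting that the target error $\mu_p$ ranges only over sets of size $\le p$, so for each fixed $p$ all but finitely many steps are fully controlled while the exceptional early steps contribute merely $C^{p}$. The remaining care is bookkeeping: confirming that a single sequence $\{n_k\}$, built with $\epsilon,\theta$ fixed once in terms of $\lambda$, meets $\mu_p\le C^{p}$ for all $p$ simultaneously, which it does because the controlled/uncontrolled split above is uniform in $p$.
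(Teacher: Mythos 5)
Your proof is correct, and it reaches arbitrary $\lambda>1$ by a genuinely different mechanism than the paper, even though both constructions pick $n_m$ by the same Bessel count against products of previously chosen functions (the paper packages this as Lemma \ref{L5}). The paper controls \emph{all} $2^m-1$ products at step $m$ with tolerance $2^{-m}$, which forces geometric blocks $[8^{m-1},8^m)$; this produces a genuinely quasi-multiplicative subsequence with $n_k\le 8^k$, to which Corollary \ref{C4} applies as a black box, and general $\lambda>1$ is then reached by a bootstrap: splitting $\{\phi_k\}$ into even- and odd-indexed subsystems, extracting a sub-Gaussian subsequence from each, and merging improves any admissible $\lambda_1$ to $\lambda_1^{2/3}$ (a finite union of sub-Gaussian sequences being sub-Gaussian), so iterating from $\lambda_1=8$ covers all $\lambda>1$. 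You instead control only the entropy-small family of products of size $\le\epsilon m$, which lets the selection live in blocks of ratio $2^{H(\epsilon)}\theta^{-2}$, arbitrarily close to $1$, so every $\lambda>1$ is handled in a single pass; the price is that your subsequence is not quasi-multiplicative but only satisfies $\mu(\cdot,{\mathfrak M}_p)\le C^p$, and the extra observation you supply --- that an exponential-in-$p$ multiplicative error is absorbed as $(1+\mu_p)^{1/p}=O(1)$ once Theorem \ref{T1} is applied with $l=p$ and combined with \eqref{a20} --- is the real point of departure, since the paper never exploits Theorem \ref{T1} with a nonzero error in this quantitative way. Your bookkeeping checks out: the controlled sum $\sum_m\binom{m-1}{\nu-1}\theta^m=(\theta/(1-\theta))^{\nu}$, the uncontrolled sets forcing $m<p/\epsilon$, and the inclusion of $S=\varnothing$ so that the means $\textbf{E}[\psi_{n_m}]$ are summable (a point the paper's recursion, which only takes the $2^m-1$ nonempty products as the $f_j$, actually glosses over). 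The one cosmetic correction: do not renormalize to $\|\phi_k\|_2=1$, which could destroy the $L^\infty$ bound; use instead the Bessel count for an orthogonal system with $\|\phi_k\|_2\le M$, namely $\#\{k:|\langle\phi_k,h\rangle|\ge\delta\}\le M^2\|h\|_2^2\delta^{-2}$, which only changes the constant $C'$. In sum, the paper's route buys a cleaner structural conclusion (a union of two quasi-multiplicative sequences) and a shorter verification; yours buys a one-pass construction with explicit dependence of all constants on $\lambda$ and $M$, and avoids the interleaving argument entirely.
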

The following statement is a version of a lemma from \cite{Kar3}.
\begin{lemma}\label{L5}
	Let $\phi_k$, $k=1,2,\ldots,n$ be an orthogonal system of random variables with $\|\phi_k\|_2\le 1$, and $f_j\in L^2$, $j=1,2,\ldots,m$. Then there is an $l$, $1\le l\le n$, such that
	\begin{equation}
	\sum_{j=1}^m\left|{\textbf E}(f_j\cdot \phi_l)\right|\le \sqrt{\frac{ m\cdot \sum_{j=1}^m\|f_j\|_2^2}{ n}}.
	\end{equation}
\end{lemma}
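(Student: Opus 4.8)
The plan is to establish the existence of the required index $l$ by a simple averaging argument. Setting $S_l:=\sum_{j=1}^m\left|\textbf{E}(f_j\phi_l)\right|$, I would bound the total $\sum_{l=1}^n S_l$ from above, and then invoke the elementary fact that the minimum of the $n$ numbers $S_1,\ldots,S_n$ never exceeds their average. The claimed bound is precisely this average, so it suffices to show $\tfrac1n\sum_{l=1}^n S_l\le \sqrt{m\,(\sum_{j}\|f_j\|_2^2)/n}$.

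First I would interchange the order of summation, $\sum_{l=1}^n S_l=\sum_{j=1}^m\sum_{l=1}^n\left|\textbf{E}(f_j\phi_l)\right|$, and estimate the inner sum for each fixed $j$. The Cauchy--Schwarz inequality in the index $l$ gives $\sum_{l=1}^n\left|\textbf{E}(f_j\phi_l)\right|\le \sqrt{n}\,\big(\sum_{l=1}^n\left|\textbf{E}(f_j\phi_l)\right|^2\big)^{1/2}$. The one step that warrants care is controlling the sum of squared coefficients by Bessel's inequality: since the $\phi_l$ are orthogonal, the normalized functions $\phi_l/\|\phi_l\|_2$ form an orthonormal system (any $\phi_l$ that vanishes contributes a zero coefficient and may be discarded), whence $\sum_{l}\big|\textbf{E}(f_j\,\phi_l/\|\phi_l\|_2)\big|^2\le \|f_j\|_2^2$. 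Because $\|\phi_l\|_2\le 1$ the factors $1/\|\phi_l\|_2^2\ge 1$ only strengthen the left side, so dropping them yields $\sum_{l=1}^n\left|\textbf{E}(f_j\phi_l)\right|^2\le \|f_j\|_2^2$, and therefore $\sum_{l=1}^n\left|\textbf{E}(f_j\phi_l)\right|\le \sqrt{n}\,\|f_j\|_2$.

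Next I would sum over $j$ and apply Cauchy--Schwarz once more, this time in the index $j$, using $\sum_{j=1}^m\|f_j\|_2\le \sqrt{m}\,\big(\sum_{j=1}^m\|f_j\|_2^2\big)^{1/2}$. Combining the two estimates gives $\sum_{l=1}^n S_l\le \sqrt{n}\sum_{j=1}^m\|f_j\|_2\le \sqrt{nm\sum_{j=1}^m\|f_j\|_2^2}$, so the average over $l$ satisfies $\tfrac1n\sum_{l=1}^n S_l\le \sqrt{m\,(\sum_{j=1}^m\|f_j\|_2^2)/n}$. Since at least one term of a finite sequence is no larger than the mean, there is an index $l$ with $S_l$ bounded by this quantity, which is exactly the assertion of the lemma. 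I do not expect a genuine obstacle: the argument is a two-fold Cauchy--Schwarz estimate followed by a pigeonhole step, and the only subtlety is the passage through Bessel's inequality when the $\phi_l$ are merely orthogonal with $\|\phi_l\|_2\le 1$ rather than orthonormal, resolved by normalizing and using $\|\phi_l\|_2\le 1$ to bound the coefficients from above.
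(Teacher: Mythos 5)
Your proof is correct and rests on the same three ingredients as the paper's: Bessel's inequality for the orthogonal (not necessarily orthonormal) system, Cauchy--Schwarz, and an averaging/pigeonhole step, yielding the identical bound $\sqrt{m\sum_j\|f_j\|_2^2/n}$. The only difference is the order of operations — the paper pigeonholes first on the squared sums $\sum_j|{\textbf E}(f_j\phi_l)|^2\le\frac1n\sum_j\|f_j\|_2^2$ and then applies Cauchy--Schwarz in $j$, whereas you apply Cauchy--Schwarz in $l$ and in $j$ before averaging the $S_l$ — which is an immaterial reordering.
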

\begin{proof}
	Parseval's inequality implies
	\begin{equation}
	\sum_{j=1}^m\sum_{k=1}^n\left|{\textbf E}(f_j\cdot \phi_k)\right|^2\le \sum_{j=1}^m\|f_j\|_2^2.
	\end{equation} 
Thus there exists an $l$ such that
\begin{equation}
	\sum_{j=1}^m\left|{\textbf E}(f_j\cdot \phi_l)\right|^2\le \frac{ \sum_{j=1}^m\|f_j\|_2^2}{ n}
\end{equation}
and so by H\"{o}lder's inequality we get
\begin{equation}
\sum_{j=1}^m\left|{\textbf E}(f_j\cdot \phi_l)\right|\le \sqrt m\left(\sum_{j=1}^m\left|{\textbf E}(f_j\cdot \phi_l)\right|^2\right)^{1/2}\le \sqrt{\frac{ m\cdot \sum_{j=1}^m\|f_j\|_2^2}{ n}}.
\end{equation}

\end{proof}
\begin{proof}[Proof of Theorem \ref{T5}]
Without loss of generality we can suppose that $\|\phi_k\|_\infty\le 1$. First let us prove that there exists a sub-Gaussian subsequence $\phi_{n_k}$ such that 
\begin{equation}\label{b8}
 8^{k-1}\le n_k< 8^{k},\quad k=1,2,\ldots.
\end{equation}
We will chose $n_k$ recursively. Set $n_1=1$ and suppose that we have already chosen $n_k$, $k=1,2,\ldots, m$. Apply Lemma \ref{L5} as follows. As the collection $f_k$ we consider all possible products of functions $\phi_{n_k}$, $k=1,2,\ldots,m$. The number of such products is $2^m-1$. So applying Lemma \ref{L5}, we find $\phi_{n_{m+1}}$, $16^m\le n_{m+1}<16^{m+1}$, such that
\begin{align}
\sum_{1\le k_1<\ldots<k_l\le m}\left|{\textbf E}\left(\prod_{j=1}^l\phi_{n_{k_j}}\cdot \phi_{n_{n_{m+1}}}\right)\right|\le \sqrt{\frac{ 2^m-1}{ 8^{m+1}-8^m}}< \frac{1}{2^m}.
\end{align}
Clearly, with this we determine a quasi-multiplicative and so sub-Gaussian system $\phi_{n_k}$ satisfying \eqref{b8}. Then, observe that if the statement of theorem is satisfied for a $\lambda_1>1$ then it will hold also for $\lambda=\lambda_1^{2/3}$. Indeed, we can apply the case of $\lambda=\lambda_1$ to the systems $\phi_{2k}$ and $\phi_{2k-1}$. As a result we find sub-Gaussian subsequences $\phi_{2n_k}$ and $\phi_{2m_k-1}$ such that $n_k<\lambda_1^k$ , $m_k<\lambda_1^k$ for $k>k_0$. Letting $\{r_k\}$ to be the union of sequences $\{n_k\}$ and $\{m_k\}$ arranged in the increasing order of the terms, we consider a new sequence of random variables $\phi_{r_k}$. Clearly, it will be sub-Gaussian and one can easily check that $r_k<(\lambda_1^{2/3})^k$ for $k>2k_0$. Thus, starting with $\lambda=8$ we can prove the theorem for parameters $\lambda=8^{(2/3)^k}$, $k=1,2,\ldots,$ and so for arbitrary $\lambda>1$.
\end{proof}
A wide class of multiplicative systems was recently introduced by Rubinshtein \cite{Rub}, who has shown that the system $\phi(2^kx)$ on $[0,1)$ is multiplicative whenever $\phi$ is $1$-periodic function on the real line and on $[0,1)$ it can be written in the form
\begin{equation}\label{z5}
\phi(x)=\left\{\begin{array}{lrl}
&f(x) \hbox{ if }& x\in [0,1/4),\\
&f(1/2-x)\hbox{ if }&x\in [1/4,1/2),\\
&f(x-1/2)\hbox{ if }& [1/2,3/4),\\
&f(1-x)\hbox{ if }& [3/4,1),
\end{array}
\right.
\end{equation}
for some $f\in L^\infty[0,1/4)$. Thus, from Corollary \ref{C1} and Corollary \ref{C6} it follows that
\begin{corollary}
	For any random variable $\phi$ of the form \eqref{z5} the sequence $\phi_k(x)=\phi(2^kx)$ satisfies inequalities \eqref{a33} and \eqref{a34} (with $A_k=-1,B_k=1$).
\end{corollary}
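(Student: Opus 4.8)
The plan is to read this corollary as a packaging of two earlier consequences of Theorem \ref{T1}: Corollary \ref{C1} yields \eqref{a33} and Corollary \ref{C6} yields \eqref{a34}. The dilated system $\phi_k(x)=\phi(2^kx)$ is supplied ready-made as a multiplicative system by Rubinshtein's theorem quoted just above, so the entire task reduces to checking that this concrete system satisfies the hypotheses of those two corollaries; once that verification is done, no estimation remains.

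First I would record the two structural facts needed. Each of the four branches in \eqref{z5} is a reflection or a half-period translation of $f$, so the essential range of $\phi$ equals that of $f$ on $[0,1/4)$; hence $\|\phi_k\|_\infty=\|\phi\|_\infty=\|f\|_\infty$, and under the normalization making the parenthetical $(A_k=-1,B_k=1)$ meaningful (that is, $\|f\|_\infty\le 1$) we get $\|\phi_k\|_\infty\le 1$, which is equivalent to the two-sided bound \eqref{a28} with $A_k=-1$ and $B_k=1$. By Rubinshtein's result the sequence $\phi_k(x)=\phi(2^kx)$ is a full multiplicative system on $[0,1)$; in particular, for each fixed $n$ the truncation $\phi_1,\ldots,\phi_n$ is ${\mathfrak M}_n$-multiplicative, and mean-zero for singletons (hence $\int_0^1\phi=0$) is already contained in the multiplicativity asserted by Rubinshtein.

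With these facts in hand the two inequalities are immediate. For \eqref{a33} I would apply Corollary \ref{C1} with $l=n$ to $\phi_1,\ldots,\phi_n$: full multiplicativity gives the ${\mathfrak M}_n$-multiplicativity hypothesis and $\|\phi_k\|_\infty\le 1$ gives the boundedness hypothesis, so \eqref{a33} holds for every convex $\Phi$ and every $a_1,\ldots,a_n$. For \eqref{a34} I would apply Corollary \ref{C6} directly, since $\phi$ is multiplicative and satisfies \eqref{a28} with $A_k=-1,B_k=1$; here $(B_k-A_k)^2=4$, so the right-hand exponent reduces to $-\lambda^2/(2n)$, exactly matching \eqref{a34}. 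I do not anticipate a genuine obstacle: the only point demanding care is confirming from \eqref{z5} that $\phi$ is bounded by $1$ with the correct envelope $[-1,1]$, and then invoking Rubinshtein's theorem to provide the multiplicativity that feeds both Corollary \ref{C1} and Corollary \ref{C6}.
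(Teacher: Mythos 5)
Your proposal matches the paper's own (essentially one-line) proof: the paper simply invokes Rubinshtein's theorem for the multiplicativity of $\{\phi(2^kx)\}$ and then cites Corollary \ref{C1} for \eqref{a33} and Corollary \ref{C6} for \eqref{a34}, exactly as you do. Your extra remarks on the normalization $\|f\|_\infty\le 1$ and the reduction of the exponent when $(B_k-A_k)^2=4$ are correct fillings-in of details the paper leaves implicit.
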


\bibliographystyle{plain}

\begin{bibdiv}
\begin{biblist}
	\bib{Alex}{book}{
		author={Alexits, G.},
		title={Convergence problems of orthogonal series},
		series={Translated from the German by I. F\"{o}lder. International Series of
			Monographs in Pure and Applied Mathematics, Vol. 20},
		publisher={Pergamon Press, New York-Oxford-Paris},
		date={1961},
		pages={ix+350},
		review={\MR{0218827}},
	}
\bib{AlexSh}{article}{
	author={Alexits, G.},
	author={Sharma, A.},
	title={On the convergence of multiplicatively orthogonal series},
	journal={Acta Math. Acad. Sci. Hungar.},
	volume={22},
	date={1971/72},
	pages={257--266},
	issn={0001-5954},
	review={\MR{287242}},
	doi={10.1007/BF01896019},
}
\bib{Azu}{article}{
	author={Azuma, Kazuoki},
	title={Weighted sums of certain dependent random variables},
	journal={Tohoku Math. J. (2)},
	volume={19},
	date={1967},
	pages={357--367},
	issn={0040-8735},
	review={\MR{221571}},
	doi={10.2748/tmj/1178243286},
}
\bib{Fuk1}{article}{
	author={Fukuyama, Katusi},
	title={A mean central limit theorem for weakly multiplicative systems and
		its application to lacunary trigonometric series},
	journal={Probab. Theory Related Fields},
	volume={89},
	date={1991},
	number={2},
	pages={159--179},
	issn={0178-8051},
	review={\MR{1110536}},
	doi={10.1007/BF01366904},
}
\bib{Fuk2}{article}{
	author={Fukuyama, Katusi},
	title={On some limit theorems for weakly multiplicative systems},
	conference={
		title={Limit theorems in probability and statistics},
		address={P\'{e}cs},
		date={1989},
	},
	book={
		series={Colloq. Math. Soc. J\'{a}nos Bolyai},
		volume={57},
		publisher={North-Holland, Amsterdam},
	},
	date={1990},
	pages={197--214},
	review={\MR{1116788}},
}
\bib{Gap1}{article}{
	author={Gapo\v{s}kin, V. F.},
	title={Observation on a work of P. R\'{e}v\'{e}sz on multiplicative systems of
		functions},
	language={Russian},
	journal={Mat. Zametki},
	volume={1},
	date={1967},
	pages={653--656},
	issn={0025-567X},
	review={\MR{212486}},
}
\bib{Gap2}{article}{
	author={Gapo\v{s}kin, V. F.},
	title={The convergence of series in weakly multiplicative systems of
		functions},
	language={Russian},
	journal={Mat. Sb. (N.S.)},
	volume={89(131)},
	date={1972},
	pages={355--365, 533},
	review={\MR{0334315}},
}

	\bib{Haa}{article}{
		author={Haagerup, Uffe},
		title={The best constants in the Khintchine inequality},
		journal={Studia Math.},
		volume={70},
		date={1981},
		number={3},
		pages={231--283 (1982)},
		issn={0039-3223},
		review={\MR{654838}},
		doi={10.4064/sm-70-3-231-283},
	}
\bib{Hoe}{article}{
	author={Hoeffding, Wassily},
	title={Probability inequalities for sums of bounded random variables},
	journal={J. Amer. Statist. Assoc.},
	volume={58},
	date={1963},
	pages={13--30},
	issn={0162-1459},
	review={\MR{144363}},
}
	\bib{Kar1}{article}{
		author={Karagulyan, Grigori A.},
		title={The choice of a convergence subsystem with logarithmic density
			from an arbitrary orthonormal system},
		language={Russian},
		journal={Mat. Sb. (N.S.)},
		volume={136(178)},
		date={1988},
		number={1},
		pages={41--55, 160},
		issn={0368-8666},
		translation={
			journal={Math. USSR-Sb.},
			volume={64},
			date={1989},
			number={1},
			pages={41--56},
			issn={0025-5734},
		},
		review={\MR{945899}},
		doi={10.1070/SM1989v064n01ABEH003293},
	}
\bib{Kar2}{article}{
	author={Karagulyan, Grigori A.},
	title={Equivalent orthonormal systems},
	language={Russian},
	journal={Izv. Akad. Nauk Armyan. SSR Ser. Mat.},
	volume={22},
	date={1987},
	number={5},
	pages={510--513},
	issn={0002-3043},
	review={\MR{931887}},
}
\bib{Kar3}{article}{
	author={Karagulyan, Grigori A.},
	title={Isolation of subsystems of unconditional convergence from
		orthonormal systems of a certain class},
	language={Russian, with Armenian summary},
	journal={Akad. Nauk Armyan. SSR Dokl.},
	volume={82},
	date={1986},
	number={4},
	pages={160--164},
	issn={0321-1339},
	review={\MR{873801}},
}
\bib{KaSa}{book}{
	author={Kashin, B. S.},
	author={Saakyan, A. A.},
	title={Orthogonal series},
	series={Translations of Mathematical Monographs},
	volume={75},
	note={Translated from the Russian by Ralph P. Boas;
		Translation edited by Ben Silver},
	publisher={American Mathematical Society, Providence, RI},
	date={1989},
	pages={xii+451},
	isbn={0-8218-4527-6},
	review={\MR{1007141}},
}

\bib{Kom}{article}{
	author={Koml\'{o}s, J.},
	title={On the series $\sum c_{k}\varphi _{k}$},
	journal={Studia Sci. Math. Hungar.},
	volume={7},
	date={1972},
	pages={451--458},
	issn={0081-6906},
	review={\MR{374795}},
}
\bib{KoRe}{article}{
		author={Koml\'{o}s, J.},
		author={R\'{e}v\'{e}sz, P.},
	title={Remark to a paper of Gaposhkin},
	journal={Acta Sci. Math. (Szeged)},
	volume={33},
	date={1972},
	pages={237--241},
	issn={0001-6969},
	review={\MR{0320617}},
}
\bib{Shi}{article}{
	author={Peshkir, G.},
	author={Shiryaev, A. N.},
	title={Khinchin inequalities and a martingale extension of the sphere of
		their action},
	language={Russian},
	journal={Uspekhi Mat. Nauk},
	volume={50},
	date={1995},
	number={5(305)},
	pages={3--62},
	issn={0042-1316},
	translation={
		journal={Russian Math. Surveys},
		volume={50},
		date={1995},
		number={5},
		pages={849--904},
		issn={0036-0279},
	},
	review={\MR{1365047}},
	doi={10.1070/RM1995v050n05ABEH002594},
}
\bib{Rub}{article}{
	author={Rubinshte\u{\i}n, A. I.},
	title={On a set of weakly multiplicative systems},
	language={Russian},
	journal={Mat. Zametki},
	volume={105},
	date={2019},
	number={3},
	pages={471--475},
	issn={0025-567X},
	translation={
		journal={Math. Notes},
		volume={105},
		date={2019},
		number={3-4},
		pages={473--477},
		issn={0001-4346},
	},
	review={\MR{3920424}},
	doi={10.4213/mzm11856},
}
\bib{Ste}{article}{
	author={Ste\v{c}kin, S. B.},
	title={On best lacunary systems of functions},
	language={Russian},
	journal={Izv. Akad. Nauk SSSR Ser. Mat.},
	volume={25},
	date={1961},
	pages={357--366},
	issn={0373-2436},
	review={\MR{0131097}},
}
\bib{Young}{article}{
	author={Young, R. M. G.},
	title={On the best possible constants in the Khintchine inequality},
	journal={J. London Math. Soc. (2)},
	volume={14},
	date={1976},
	number={3},
	pages={496--504},
	issn={0024-6107},
	review={\MR{438089}},
	doi={10.1112/jlms/s2-14.3.496},
}

\bib{Zyg}{article}{
	author={Zygmund, Antoni},
	title={On lacunary trigonometric series},
	journal={Trans. Amer. Math. Soc.},
	volume={34},
	date={1932},
	number={3},
	pages={435--446},
	issn={0002-9947},
	review={\MR{1501647}},
	doi={10.2307/1989363},
}
\bib{Zyg1}{book}{
	author={Zygmund, Antoni},
	title={Trigonometric series. 2nd ed. Vol. 1},
	publisher={Cambridge University Press, New York},
	date={1959},
	review={\MR{0107776}},
}
\end{biblist}
\end{bibdiv}

\end{document}